\tikzset{commutative diagrams/.cd,every label/.append style = {font = \normalsize}}
\numberwithin{equation}{section}
\newtheorem{thm}{Theorem}
\numberwithin{thm}{section}
\theoremstyle{definition}
\newtheorem{defn}[thm]{Definition}
\newtheorem{eg_no_qed}[thm]{Example}
\newtheorem{rmk}[thm]{Remark}
\theoremstyle{remark}
\newtheorem*{claimpf_no_qed}{Proof of Claim}
\def\la{\lambda}
\def\pix{\pi(x)}
\def\t{{\theta}}
\title{ The Monge--Kantorovich problem, the Schur--Horn theorem, and 
the  diffeomorphism group of the annulus}
\author{Anthony M. Bloch}
\address{Department of Mathematics, University of Michigan, Ann Arbor, 
MI 48109, USA}
\email{\href{mailto:abloch@umich.edu}{abloch@umich.edu}}
\author{Tudor S. Ratiu}
\address{School of Mathematical Sciences, Ministry of Education Key Lab on
Scientific and Engineering Computing, Shanghai Frontier Science Center
of Modern Analysis, Shanghai Jiao Tong University, Minhang District, Shanghai,
200240 China}
\email{\href{mailto:ratiu@sjtu.edu.cn}{ratiu@sjtu.edu.cn}}
\thanks{A.M.B.\ was partially supported by NSF grant
DMS-2103026, and AFOSR grants  FA 9550-22-1-0215 and FA 9550-23-1-0400. T.S.R was partially 
supported by the National Natural Science Foundation of China  grant
number 1247012587 and by the Swiss National Science Foundation grant 
NCCR SwissMAP.}
  \ifodd\value{page}\relax
\begin{document}

\thispagestyle{plain}

\begin{abstract} First, we analyze the discrete Monge--Kantorovich 
problem, linking it with the minimization problem of linear 
functionals over adjoint orbits. Second, we consider its 
generalization to the setting of area preserving diffeomorphisms 
of the annulus. In both cases, we show how the problem can 
be linked to permutohedra, majorization, and to gradient 
flows with respect to a suitable metric. 
\end{abstract}

\maketitle

\section{Introduction} In this paper, we consider a geometric 
approach to analyzing a particular setting of the Monge--Kantorovich 
problem and its connection to Schur--Horn theory. 
We begin by analyzing the  finite-dimensional setting  
(as in Brezis \cite{Brezis2018}) and then we consider 
the Monge--Kantorovich problem on the diffeomorphism group 
of the annulus. This is a particular interesting case of 
the general infinite-dimensional problem, as presented 
in,  for example, Evans \cite{evans2001} and Gangbo and McCann 
\cite{gangbo1996geometry}. We relate these problems, respectively, 
to the  classical  Schur--Horn theorem,  and its infinite-dimensional 
generalization to the diffeomorphism group of the annulus 
as proved in \cite{bloch_flaschka_ratiu93}.  We also consider 
the dual problems. In addition, we relate these problems to 
the gradient flows discussed in 
\cite{bloch_brockett_ratiu92}, \cite{bloch_karp23a}, and 
\cite{bloch_flaschka_ratiu96}, casting them in a dynamical setting.

There are, of course, many other treatments and applications 
of the Monge--Kantorovich problem; see the references in the 
papers cited above and, e.g., \cite{chen2021optimal},
\cite{chen2017matrix}. For the theory applied here, this paper  
relies  on previous work of the authors and introduces  
a novel setting for the problem as well as 
a novel mathematical approach to the annulus setting which 
is close to the analysis of Brezis in finite dimensions.

\section{The finite dimensional case}

Brezis \cite{Brezis2018} introduces the following finite-dimensional setting.

Consider two sets $X,Y$ consisting of $m$ points $\{P_i\}$ and 
$\{N_i\},\,1\le i \le m$, i.e.,

\begin{equation}
X=\{P_1,P_2\dots ,P_m\}\,,\quad Y=\{N_1,N_2\dots ,N_m\}\,.
\end{equation}
Let $c:X\times Y\rightarrow\mathbb{R}$ be a smooth ``cost" function.
Brezis introduces three problems denoted $\mathbf{M}$, $\mathbf{K}$,
and $ \mathbf{D}$ (for dual): 
\begin{equation}
\mathbf{M} :=\text{min}_{\sigma\in S_m}\sum_{i=1}^m c(P_i,N_{\sigma(i)}),
\label{M}
\end{equation}
where $S_m$ is the permutation group of $\{1, \ldots, m\}$, and 
\begin{equation}
\mathbf{K} :=\text{min}_{A}\left\{\left. \sum_{i,j=1}^m a_{ij}c(P_i,N_j)\,
\right|\, A=(a_{ij})\, \text{is doubly stochastic}\right\}.
\label{K}
\end{equation}
For our purposes, we formulate the dual problem as in 
\cite{Brezis2018}:
\begin{equation}
\qquad\quad
\mathbf{D} :=\text{sup}_{\varphi:X\rightarrow\mathbb{R},\,
\psi:Y\rightarrow\mathbb{R}}\left\{\left. \sum_{i=1}^m 
(\varphi(P_i)-\psi(N_i))\, \right|\,\varphi(x)-\psi(y)\le c(x,y), 
\forall x\in X, y\in Y\right\}.
\end{equation}

Brezis proves that $\mathbf{M} =\mathbf{K} =\mathbf{D}$. We shall 
consider these equalities from the point of view of majorization 
and dynamics. It is clear, for example, that $\mathbf{M}\ge
\mathbf{K}$ since a permutation matrix (every row and column has
exactly one entry equal to 1 and all other entries equal to 0) is 
a special case of a doubly stochastic matrix (all entries are 
$\geq 0$ and the sum  of all entries in each row and each column 
is 1). We shall also show this is true in our infinite-dimensional 
setting. We also show that $\mathbf{K}\ge\mathbf{D}$ in 
both cases. Brezis also shows $\mathbf{D}\ge\mathbf{M}$ 
in the finite case completing the equalities.  We will not go through the 
details of this step here but the result applies to our setting. 

\subsection{The adjoint orbit and dynamical setting}

We can arrive also at this finite setting by considering the problem 
of minimizing $\text{Trace}(LN)$, where $L$ belongs to 
the isospectral set of skew Hermitian matrices defined by the purely
imaginary diagonal matrix $\Lambda$ and $N$ is a constant skew 
Hermitian matrix, as in \cite{bloch_brockett_ratiu92}.

More precisely we consider  
\begin{align}
\text{min}_{\Theta}\|L-N\|^2&:=\text{min}_{\Theta}
\left\langle L-N,L-N\right\rangle 
:=\text{min}_{\Theta}\left\langle \Theta^T\Lambda\Theta-N,
\Theta^T\Lambda\Theta-N\right\rangle 
\end{align}
where $\Theta$ belongs to the group of unitary matrices. This  is 
equivalent to minimizing $\text{Trace}(LN)$ and is the Monge problem 
$\mathbf{M}$ in this setting. 

We arrive at the solution by following the gradient dynamics
\begin{equation}
 \dot{L}=[L,[L,N]]
 \label{dbb}
\end{equation}
which is the gradient flow of $\text{Trace}(LN)$ on an adjoint orbit
of the unitary group $\operatorname{U}(n)$
with respect to the normal metric. 

\begin{thm}
Equation \eqref{dbb} is  the gradient flow of  ${\rm Trace}(LN)$ 
with respect to the normal metric on an adjoint orbit of 
$\operatorname{U}(n)$. For $N$ diagonal with distinct diagonal 
entries and $L$ having initial condition with distinct eigenvalues, 
there are $n!$ 
equilibria corresponding to the $n!$  diagonal matrices 
with rearranged eigenvalues. The stable equilibrium 
 is the one having 
the same ordering as the entries of $N$, 
after dividing both by $i$.
\end{thm}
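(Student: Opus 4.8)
The plan is to treat the three assertions separately, working on the adjoint orbit $\mathcal O_\Lambda=\{\Theta^{*}\Lambda\Theta:\Theta\in\operatorname U(n)\}\subset\mathfrak u(n)$ equipped with the $\operatorname{Ad}$‑invariant inner product $\langle X,Y\rangle:=-\operatorname{Trace}(XY)=\operatorname{Trace}(X^{*}Y)$ on $\mathfrak u(n)$, which is positive definite. At $L\in\mathcal O_\Lambda$ the tangent space is $T_L\mathcal O_\Lambda=\{[\xi,L]:\xi\in\mathfrak u(n)\}$; since $\operatorname{ad}_L$ is skew‑adjoint for $\langle\cdot,\cdot\rangle$ one has the orthogonal decomposition $\mathfrak u(n)=\ker\operatorname{ad}_L\oplus\operatorname{im}\operatorname{ad}_L$ with $\operatorname{im}\operatorname{ad}_L=(\ker\operatorname{ad}_L)^{\perp}$, so each tangent vector has a unique representative $[\xi,L]$ with $\xi\in(\ker\operatorname{ad}_L)^{\perp}$, and the normal metric is $\langle\!\langle[\xi,L],[\eta,L]\rangle\!\rangle_L:=\langle\xi,\eta\rangle$ on those. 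I would compute $d\!\left(\operatorname{Trace}(LN)\right)_L([\xi,L])=\operatorname{Trace}(\xi[L,N])=-\langle\xi,[L,N]\rangle$, note that $[L,N]=\operatorname{ad}_L N$ lies in $\operatorname{im}\operatorname{ad}_L$, and read off $\operatorname{grad}\operatorname{Trace}(LN)(L)=[L,[L,N]]$; hence \eqref{dbb} is the (steepest‑descent) gradient flow of $\operatorname{Trace}(LN)$ for the normal metric, with $\tfrac{d}{dt}\operatorname{Trace}(LN)=-\|[L,N]\|^{2}\le 0$. Being a Lax equation, \eqref{dbb} is isospectral and so preserves $\mathcal O_\Lambda$, on which $\operatorname{Trace}(LN)$ is a Lyapunov function, strictly decreasing off the set $\{[L,N]=0\}$.

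\emph{The equilibria.} Pairing the right‑hand side of \eqref{dbb} with $N$ and using skew‑adjointness of $\operatorname{ad}_L$ gives $\langle[L,[L,N]],N\rangle=-\|[L,N]\|^{2}$, so $\dot L=0$ forces $[L,N]=0$; conversely $[L,N]=0$ clearly gives an equilibrium. As $N$ is diagonal with distinct entries, $[L,N]=0$ forces $L$ diagonal, and since $L$ is isospectral to $\Lambda$, whose diagonal entries $\lambda_1,\dots,\lambda_n$ are distinct (this is the hypothesis that the initial eigenvalues are distinct), $L=\Lambda_\sigma:=\operatorname{diag}(\lambda_{\sigma(1)},\dots,\lambda_{\sigma(n)})$ for a unique $\sigma\in S_n$. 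The $n!$ matrices $\Lambda_\sigma$ are pairwise distinct, so there are exactly $n!$ equilibria, namely the permuted diagonals (the vertices of the permutohedron of the diagonal entries).

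\emph{Stability.} Linearize \eqref{dbb} at $L_*=\Lambda_\sigma$ along the orbit: with $L=\Lambda_\sigma+X$, $X\in T_{L_*}\mathcal O_\Lambda$, and $[\Lambda_\sigma,N]=0$, the linearization is $\dot X=[\Lambda_\sigma,[N,X]]$. Because $\Lambda_\sigma$ and $N$ are diagonal with distinct entries, $T_{L_*}\mathcal O_\Lambda$ is exactly the space of zero‑diagonal skew‑Hermitian matrices, and, extending scalars, the matrix units $E_{pq}$ ($p\neq q$) are eigenvectors: $[\Lambda_\sigma,[N,E_{pq}]]=(\nu_p-\nu_q)(\lambda_{\sigma(p)}-\lambda_{\sigma(q)})\,E_{pq}$, where $N=\operatorname{diag}(\nu_1,\dots,\nu_n)$. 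Writing $\Lambda=i\operatorname{diag}(a_1,\dots,a_n)$ and $N=i\operatorname{diag}(b_1,\dots,b_n)$ with the $a_j$ distinct real and the $b_j$ distinct real, this eigenvalue is the real number $-(a_{\sigma(p)}-a_{\sigma(q)})(b_p-b_q)$; hence every $\Lambda_\sigma$ is hyperbolic, and $\Lambda_\sigma$ is asymptotically stable if and only if all of these are negative, that is, $(a_{\sigma(p)}-a_{\sigma(q)})(b_p-b_q)>0$ for every $p\neq q$. This says precisely that $(a_{\sigma(1)},\dots,a_{\sigma(n)})$ is sorted in the same order as $(b_1,\dots,b_n)$, i.e. that the diagonal of $\Lambda_\sigma$ and the diagonal of $N$ have the same ordering after division by $i$; by the rearrangement inequality exactly one $\sigma\in S_n$ does this (it is the $\sigma$ minimizing $\operatorname{Trace}(\Lambda_\sigma N)$, the value of the Monge problem on $\mathcal O_\Lambda$), while every other $\Lambda_\sigma$ has a positive linearized eigenvalue and is unstable. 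To conclude that this is the equilibrium the flow actually reaches, I would finally note that \eqref{dbb} is a gradient flow on the compact manifold $\mathcal O_\Lambda$ with $\operatorname{Trace}(LN)$ a Morse Lyapunov function (Morse by the distinctness hypotheses), so every trajectory converges to an equilibrium and the unique stable one has open dense basin.

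The stability step is where I expect the real work: obtaining the linearization on $T_{L_*}\mathcal O_\Lambda$ rather than on all of $\mathfrak u(n)$ (on the full matrix space the diagonal directions are spurious null directions, corresponding to leaving the orbit), diagonalizing it cleanly in the $E_{pq}$‑basis, and carrying the factor $i^{2}=-1$ through so the ordering statement emerges as asserted. By comparison the gradient identity and the equilibrium count are short bracket computations; the one delicate point there is the identity $\operatorname{im}\operatorname{ad}_L=(\ker\operatorname{ad}_L)^{\perp}$, which legitimizes $-[L,N]$ as the normal‑metric representative of $d\operatorname{Trace}(LN)$.
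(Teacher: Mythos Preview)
Your derivation of the gradient vector field is essentially the paper's own proof (stated separately as the next theorem): you use the orthogonal splitting $\mathfrak u(n)=\ker\operatorname{ad}_L\oplus\operatorname{im}\operatorname{ad}_L$, write a tangent vector with its unique representative in $\operatorname{im}\operatorname{ad}_L$, and read off $X^L=[L,N]$ so that $\operatorname{grad}H=[L,[L,N]]$. The paper phrases tangent vectors as $[L,\delta L]$ rather than $[\xi,L]$ and works with a generic $\delta L$ instead of the normalized representative, but the computation is the same.

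For the equilibrium count and the stability assertion the paper gives no argument at all: it simply says these ``follow from the fact that the flow is isospectral'' and cites \cite{bloch_brockett_ratiu92}. Your explicit treatment---showing $\dot L=0\Leftrightarrow[L,N]=0\Leftrightarrow L$ diagonal, hence $L=\Lambda_\sigma$ for some $\sigma\in S_n$, and then linearizing and diagonalizing in the $E_{pq}$ basis---is exactly the standard argument behind that citation, so you are supplying what the paper omits rather than taking a different route.

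One caution on signs. Linearizing $\dot L=[L,[L,N]]$ at $\Lambda_\sigma$ (where $[\Lambda_\sigma,N]=0$) gives $\dot X=[\Lambda_\sigma,[X,N]]$, not $[\Lambda_\sigma,[N,X]]$; this flips the sign of every eigenvalue in your $E_{pq}$ computation. Relatedly, since you show $\operatorname{grad}H=[L,[L,N]]$ and \eqref{dbb} is $\dot L=+\operatorname{grad}H$, one has $\tfrac{d}{dt}\operatorname{Trace}(LN)=+\lVert[L,N]\rVert^{2}$, not $-\lVert[L,N]\rVert^{2}$. These two slips cancel in your final ordering conclusion, but you should straighten them out so that the Lyapunov monotonicity, the linearized spectrum, and the identification of the stable equilibrium (as the extremum of $\operatorname{Trace}(LN)$ over $\{\Lambda_\sigma\}$ via the rearrangement inequality) are all consistent.
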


We give the proof of the gradient form below. The other conclusions
follow from the fact that the flow is isospectral (see also
\cite{bloch_brockett_ratiu92} for detailed proofs). 
The gradient flow of $\text{Trace}(LN)$ on an adjoint orbit 
$\mathcal{O}$ of the unitary group endowed with the ``standard'' 
or ``normal'' metric will be computed below.  Explicitly, this 
metric is given as follows.

Let $\kappa : \mathfrak{u}(n) \times \mathfrak{u}(n)
\rightarrow  \mathbb{R}$ be the Killing form $ \kappa (AB):=
\operatorname{Trace}(AB)$. Decompose orthogonally $L \in \mathfrak{g}:= 
\mathfrak{u}(n)$ relative to the inner product $\langle~~,~~\rangle:=
-\kappa (~~,~~)$, $\mathfrak{g} = \mathfrak{g}^L
\oplus\mathfrak{g}_{L}$ where $\mathfrak{g}_{L}$ is the centralizer 
of $L$ and $\mathfrak{g}^L=\text {Im}\operatorname{ad}_L$, where
$ \operatorname{ad}_L:=[L,~~]$.
For $X\in\mathfrak{g}$ denote by $X^L\in\mathfrak{g}^L$ the 
orthogonal projection of $X$ on $\mathfrak{g}^L$.  
Then define the inner product of the tangent vectors $[L,X]$ and $[L,Y]$
at $L$ to the orbit through $L$  to equal $\langle X^L,Y^L\rangle$. 
This defines a Riemannian metric on the adjoint orbit through $L$ which 
will be denoted by $\langle ~~,~~\rangle_n$ and is called the 
\textit{normal metric}.  Then we have the following result.

\begin{thm} The flow \eqref{dbb} is the gradient vector field of 
$H(L)=\kappa (L,N)$ on the adjoint orbit $\mathcal{O}$ of 
$\operatorname{U}(n)$ containing the initial condition 
$L(0)=L_0$, with respect to the normal metric $\langle~~,~~\rangle_n$.
\end{thm}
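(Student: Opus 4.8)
The plan is to compute the gradient of $H(L) = \kappa(L,N)$ directly from the defining property of a gradient vector field, namely that for every tangent vector $v$ at $L$ to the orbit $\mathcal{O}$ one has $\langle \operatorname{grad} H(L), v\rangle_n = \mathbf{d}H(L)\cdot v$, and then to check the answer equals $[L,[L,N]]$. First I would parametrize tangent vectors at $L$ to the adjoint orbit: every such vector has the form $[L,X]$ for some $X \in \mathfrak{g}$, and by the orthogonal decomposition $\mathfrak{g} = \mathfrak{g}^L \oplus \mathfrak{g}_L$ we may take $X = X^L \in \mathfrak{g}^L = \operatorname{Im}\operatorname{ad}_L$ without loss of generality (since $\mathfrak{g}_L = \ker\operatorname{ad}_L$ contributes nothing). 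Next I would differentiate $H$ along the curve $t \mapsto \operatorname{Ad}_{\exp(tX)}L$ whose derivative at $t=0$ is $[X,L] = -[L,X]$; using bi-invariance of $\kappa$ (equivalently $\operatorname{ad}$-invariance, $\kappa([A,B],C) = \kappa(A,[B,C])$), we get
\begin{equation}
\mathbf{d}H(L)\cdot[L,X] = \kappa([X,L],N) = \kappa(X,[L,N]) = -\langle X, [L,N]\rangle.
\end{equation}

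Then I would write the putative gradient as $[L,Z]$ with $Z \in \mathfrak{g}^L$ to be determined, and unpack the normal-metric pairing against a test vector $[L,X]$ with $X \in \mathfrak{g}^L$: by the definition of $\langle\,,\,\rangle_n$ this equals $\langle Z^L, X^L\rangle = \langle Z, X\rangle$. Matching this with $-\langle X,[L,N]\rangle$ from the previous step forces $\langle Z, X\rangle = -\langle [L,N], X\rangle$ for all $X \in \mathfrak{g}^L$, and since $[L,N] = -\operatorname{ad}_L N \in \operatorname{Im}\operatorname{ad}_L = \mathfrak{g}^L$ already lies in the relevant subspace, this determines $Z = -[L,N]$ uniquely within $\mathfrak{g}^L$ (nondegeneracy of $\langle\,,\,\rangle$ restricted to $\mathfrak{g}^L$). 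Therefore $\operatorname{grad} H(L) = [L,Z] = [L,-[L,N]] = [L,[L,N]]$ after one more application of antisymmetry of the bracket, which is precisely \eqref{dbb}.

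The one genuinely delicate point — really the only obstacle beyond sign bookkeeping — is making sure the representative $X$ of a tangent vector is chosen in $\mathfrak{g}^L$ so that the normal metric's recipe applies cleanly: the map $X \mapsto [L,X]$ is not injective on all of $\mathfrak{g}$, and the normal metric is defined precisely by projecting to $\mathfrak{g}^L$ before pairing, so I would state explicitly that $\operatorname{ad}_L$ restricts to a linear isomorphism $\mathfrak{g}^L \to T_L\mathcal{O}$ and that the pairing $\langle [L,X],[L,Y]\rangle_n := \langle X^L, Y^L\rangle$ is thereby well-defined and nondegenerate. I would also remark that $[L,N]$ being skew-Hermitian together with $L$ skew-Hermitian keeps every bracket inside $\mathfrak{u}(n)$, so the flow stays on $\mathcal{O}$; isospectrality (hence the count of $n!$ equilibria and the identification of the stable one) then follows since \eqref{dbb} is in Lax form $\dot L = [L,[L,N]]$, as already noted in the text and in \cite{bloch_brockett_ratiu92}.
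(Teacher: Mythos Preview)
Your approach is essentially the paper's: parametrize tangent vectors as $[L,X]$, write the unknown gradient as $[L,Z]$ with $Z\in\mathfrak{g}^L$, and solve $\langle Z,X\rangle = \mathbf{d}H(L)\cdot[L,X]$. The paper does exactly this (with $\delta L$ in place of your $X$ and $X$ in place of your $Z$), projecting to $\mathfrak{g}^L$ at the end rather than at the start.

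There is, however, a sign slip that you then patch with an incorrect step. The curve $t\mapsto\operatorname{Ad}_{\exp(tX)}L$ has tangent $[X,L]=-[L,X]$, so what you have computed is $\mathbf{d}H(L)\cdot[X,L]$, not $\mathbf{d}H(L)\cdot[L,X]$. Done consistently,
\[
\mathbf{d}H(L)\cdot[L,X]=\kappa([L,X],N)=-\kappa(X,[L,N])=\langle X,[L,N]\rangle,
\]
which matched against $\langle Z,X\rangle$ gives $Z=[L,N]$ directly and hence $\operatorname{grad}H=[L,[L,N]]$ with no further manipulation. Your closing claim that $[L,-[L,N]]=[L,[L,N]]$ ``after one more application of antisymmetry of the bracket'' is simply false ($[L,-[L,N]]=-[L,[L,N]]$); it only appeared necessary because of the earlier mislabeling. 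Remove that step and correct the sign, and your proof coincides with the paper's.
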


\begin{proof}
We have, by the definition of the gradient,
\begin{equation}
dH\cdot [L,\delta L] = \langle\text {grad}\,H,[L,\delta L] 
\label{gradeqn}
\rangle_n\,, 
\end{equation}
where $\cdot$ denotes the natural pairing between 1-forms and 
tangent vectors, and $[L,\delta L]$ 
is an arbitrary tangent vector  to $\mathcal{O}$
at $L$.  Set 
$\text{grad}\,H = [ L,X]$.  Then \eqref{gradeqn} becomes
$$
-\langle[ L,\delta L] ,N\rangle = \langle [L,X] ,[L,\delta L]
\rangle_n
$$

or
$$\langle [L,N],\delta L\rangle = \langle X^L,(\delta L)^L\rangle
= \langle X^L,\delta L\rangle$$
because $(\delta L)_L$ is $\left\langle \,, \right\rangle$-orthogonal 
to $X^L$. Since $\delta L$ is arbitrary and both $X^L, [L,N] \in  
\operatorname{Im} \operatorname{ad}_L$, we have $X^L=[L,N]$ and hence
$$
\text{grad}\,H =[L, [L,N]]
$$
since $X_L$ commutes with $L$.
\end{proof}

\subsection{Convexity in finite dimensions}
We now consider the Brezis equalities $\mathbf{M} =\mathbf{K} =\mathbf{D}$
from the Lie theoretic point of view presented above. We begin by recalling
the following from \cite{bloch_flaschka_ratiu93}.

Let $x=(x_1,\dots,x_n)\in\mathbb{R}^n$; then
$S_n x$ denotes the orbit of $x$ under the
symmetric group on $n$ letters, i.e., the collection of
all points $(x_{s(1)},\dots,x_{s(n)})$, where $s$ ranges 
over all $n!$ permutations. For $C\subset\mathbb{R}^n$, 
$\widehat{C}$ denotes the convex hull of $C$, i.e., the
smallest convex set containing $C$.

\begin{thm}
{\rm  Schur's Theorem}. \cite{schur23} Let $A$ be a
Hermitian matrix with eigenvalues $\lambda_j$, arranged in
non-increasing order.  Let
$\lambda=(\lambda_1,\dots,\lambda_n)$ and  $A^0=
(A_{11},\dots,A_{nn})$ be the diagonal of $A$.
Then
$$ A^0\in \widehat{S_n\lambda}.$$
\end{thm}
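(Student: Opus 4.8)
The plan is to reduce the statement to the Birkhoff--von Neumann theorem, tying it directly to the doubly stochastic matrices that govern problem $\mathbf{K}$. First I would diagonalize: write $A = \Theta^{-1}\Lambda\Theta = \Theta^\ast\Lambda\Theta$ with $\Theta\in\operatorname{U}(n)$ and $\Lambda=\operatorname{diag}(\lambda_1,\dots,\lambda_n)$. Reading off the diagonal entries gives
\[
A_{ii}=\sum_{j=1}^n \overline{\Theta_{ji}}\,\lambda_j\,\Theta_{ji}=\sum_{j=1}^n|\Theta_{ji}|^2\,\lambda_j,
\]
so that $A^0=D\lambda$, where $D=(D_{ij})$ with $D_{ij}:=|\Theta_{ji}|^2$.

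Next I would verify that $D$ is doubly stochastic. Its entries are nonnegative; since the columns of the unitary matrix $\Theta$ are orthonormal, $\sum_j D_{ij}=\sum_j|\Theta_{ji}|^2=1$, and since its rows are orthonormal, $\sum_i D_{ij}=\sum_i|\Theta_{ji}|^2=1$. By the Birkhoff--von Neumann theorem the set of $n\times n$ doubly stochastic matrices is the convex hull of the $n!$ permutation matrices, so $D=\sum_k t_k P_k$ with $t_k\ge 0$, $\sum_k t_k=1$, and each $P_k$ a permutation matrix. Hence
\[
A^0=D\lambda=\sum_k t_k\,(P_k\lambda),
\]
and since each $P_k\lambda$ is a coordinate permutation of $\lambda$, i.e.\ a point of $S_n\lambda$, the vector $A^0$ is a convex combination of points of $S_n\lambda$; that is, $A^0\in\widehat{S_n\lambda}$.

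There is no substantial obstacle: the single external input is the Birkhoff--von Neumann theorem. A self-contained alternative would establish the majorization relation $A^0\prec\lambda$ directly --- the sum of any $k$ diagonal entries of $A$ is at most the sum of its $k$ largest eigenvalues, by the Ky Fan / Courant--Fischer variational principle --- and then invoke the Hardy--Littlewood--P\'olya theorem identifying $\{x\in\mathbb{R}^n:x\prec\lambda\}$ with $\widehat{S_n\lambda}$. The only point requiring mild care is the index bookkeeping and the complex conjugates in the identity for $A_{ii}$, which must be arranged so that it is $D$, rather than its transpose, that comes out doubly stochastic; since both its row and column sums equal $1$ the distinction is harmless, but it is the spot where arithmetic slips tend to occur. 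It is worth recording that this is the simplest case of the moment-map convexity phenomenon: up to the standard identifications, the ``take the diagonal'' map on the adjoint orbit $\mathcal{O}$ through $\Lambda$ is the momentum map of the maximal torus of $\operatorname{U}(n)$, and its image is precisely the permutohedron $\widehat{S_n\lambda}$.
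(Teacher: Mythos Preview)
Your argument is correct and is essentially the paper's own proof: diagonalize $A=Q\lambda Q^\ast$, note that $P_{ij}=|Q_{ij}|^2$ is doubly stochastic, and apply Birkhoff's theorem (part (d) of the compilation theorem just above) to write $A^0=P\lambda$ as a convex combination of permutations of $\lambda$. The only cosmetic difference is the index convention in the diagonalization, which---as you yourself observe---is harmless since both row and column sums of $P$ equal $1$.
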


\begin{thm}
{\rm  Horn's Theorem}. \cite{horn54} Let $\lambda\in\mathbb{R}^n$,
with components arranged in non-increasing order. If
$A^0\in \widehat{S_n \lambda}$, there is a Hermitian matrix
$A$ with eigenvalues $\lambda$ whose diagonal is $A^0$.
\end{thm}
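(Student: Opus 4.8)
The plan is to reduce Horn's theorem to a majorization statement and then to build the required matrix by a finite chain of elementary rotations. Write $\mu \prec \lambda$ to mean that $\mu$ and $\lambda$ have equal coordinate sums and that, after rearranging both in non-increasing order, every partial sum of $\mu$ is dominated by the corresponding partial sum of $\lambda$. The first step is to observe that the hypothesis $A^0 \in \widehat{S_n\lambda}$ already forces $A^0 \prec \lambda$: writing $A^0 = \sum_s c_s\, s\lambda$ as a convex combination of permuted copies of $\lambda$ (with $c_s \ge 0$, $\sum_s c_s = 1$), the sum of any $k$ coordinates of $A^0$ is at most $\lambda_1 + \cdots + \lambda_k$ because this holds for each $s\lambda$ separately, while the total sums agree since all points of $S_n\lambda$ have sum $\sum_i \lambda_i$; this is exactly majorization. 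Let $\mu = (\mu_1 \ge \cdots \ge \mu_n)$ be the non-increasing rearrangement of $A^0$, so that $\mu \prec \lambda$.

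The second ingredient is the Hardy--Littlewood--P\'olya / Muirhead lemma, which I would take as known: since $\mu \prec \lambda$, the vector $\mu$ is obtained from $\lambda$ by a finite sequence of \emph{T-transforms}, where a T-transform replaces two coordinates $(a,b)$ of the current vector by $(ta + (1-t)b,\, (1-t)a + tb)$ for some $t \in [0,1]$ and fixes the remaining coordinates. (A proof of this combinatorial fact is standard and can be found in the majorization literature.)

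The heart of the argument is that each T-transform is implemented at the matrix level by conjugation with a rotation in a coordinate $2$-plane. Suppose $B$ is Hermitian (indeed real symmetric, if we start from a real diagonal matrix) with spectrum $\{\lambda_i\}$ and diagonal equal to the current vector $\nu$, and the next T-transform of the Muirhead chain acts on coordinates $i,j$. Let $R_\theta$ be the identity outside the $(i,j)$-coordinates and a planar rotation by $\theta$ there, and set $f(\theta) := \big(R_\theta^{\top} B R_\theta\big)_{ii}$. Then $f$ is continuous with $f(0) = \nu_i$ and $f(\pi/2) = \nu_j$, while $\big(R_\theta^{\top} B R_\theta\big)_{ii} + \big(R_\theta^{\top} B R_\theta\big)_{jj} = \nu_i + \nu_j$ for every $\theta$ because the trace of the $(i,j)$-principal block is invariant under the rotation; by the intermediate value theorem there is a $\theta^\ast$ with $f(\theta^\ast) = t\nu_i + (1-t)\nu_j$, and then the two affected diagonal entries have become exactly the T-transform of $(\nu_i,\nu_j)$. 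Since $R_\theta$ is orthogonal, the conjugation preserves both Hermiticity and the spectrum. Beginning with $\operatorname{diag}(\lambda_1,\ldots,\lambda_n)$, whose diagonal is $\lambda$, and composing the rotations produced by the successive T-transforms, we reach a Hermitian matrix with spectrum $\{\lambda_i\}$ and diagonal $\mu$. Finally, conjugating by the permutation matrix that carries $\mu$ to $A^0$ (a unitary conjugation) produces the desired matrix $A$ with eigenvalues $\lambda$ and diagonal $A^0$.

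The step I expect to require the most care is the rotation step: one must verify that the off-diagonal coupling $B_{ij}$ does not obstruct hitting the exact value $t\nu_i + (1-t)\nu_j$, which it does not, precisely because that value is a convex combination of $f(0) = \nu_i$ and $f(\pi/2) = \nu_j$, so the intermediate value theorem applies regardless of $B_{ij}$; and one must keep the orderings straight, since majorization is phrased for sorted vectors while the diagonal of $A$ need not be sorted, which is exactly what the final permutation conjugation repairs. An alternative to the last two steps is Horn's original induction on $n$: pick eigenvalues $\lambda_p \ge \lambda_q$ with $\lambda_q \le \mu_1 \le \lambda_p$, realize a $2\times 2$ Hermitian block with those eigenvalues and one diagonal entry equal to $\mu_1$, check that the remaining spectral data still majorizes $(\mu_2,\ldots,\mu_n)$, apply the inductive hypothesis, and assemble the pieces; but the assembly needs slightly more bookkeeping than the rotation argument, so I would present the T-transform proof.
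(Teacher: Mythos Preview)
Your argument is correct. The reduction $A^0\in\widehat{S_n\lambda}\Rightarrow A^0\prec\lambda$ is fine, the Hardy--Littlewood--P\'olya decomposition into T-transforms is standard, and your Givens-rotation step is sound: the entries $(R_\theta^{\top}BR_\theta)_{kk}$ for $k\neq i,j$ are untouched, the $(i,j)$ block has trace $\nu_i+\nu_j$ for all $\theta$, and $f(0)=\nu_i$, $f(\pi/2)=\nu_j$ give the intermediate-value argument regardless of $B_{ij}$. Starting from $\operatorname{diag}(\lambda)$ keeps everything real symmetric, hence Hermitian, and the final permutation conjugation is harmless.

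As to the comparison: the paper does not actually prove Horn's theorem. It is stated as a cited classical result, and the only discussion is a one-sentence sketch: ``Horn proved the converse by a rather intricate argument, deducing that when $x\prec\lambda$, there must be a doubly stochastic matrix $P$ of the form $P_{ij}=|Q_{ij}|^2$, with $Q$ unitary, satisfying $x=P\lambda$; $Q\lambda Q^*$ is then the desired Hermitian $A$.'' Your T-transform/rotation proof is a concrete, constructive realization of exactly this: the product of your planar rotations is the unitary $Q$, and the induced doubly stochastic matrix $|Q_{ij}|^2$ is precisely the product of the T-transforms. So your approach is not merely an alternative but an explicit construction of what the paper's sketch asserts exists; it is cleaner and more self-contained than Horn's original induction (which you correctly describe at the end), at the cost of invoking the Muirhead/HLP lemma as a black box.
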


\begin{defn} For $x\in\mathbb{R}^n$, let
$x^*$ denote the vector obtained by rearranging the
components of $x$ in nonincreasing order. We say that $y$
{\it majorizes\/} $x$, written $x\prec y$, if
\begin{equation*}
x_1^*+\dots +x_k^*\le y_1^*+\dots y_k^*,\quad {\rm for }\quad 1\le k\le
n-1,\quad {\rm and} \quad
\sum_{j=1}^nx_j^*= \sum_{j=1}^n y_j^*.
\end{equation*}
\end{defn}

\begin{defn} An $n\times n$ real matrix
$P$ is called {\it doubly stochastic\/} if $P_{ij}\ge 0$,
and the sum of all entries in each row and each column is $1$.
\end{defn}

\begin{thm}
\label{thm_four_points_convexity}
{\rm (a)} \cite{marshall2011inequalities} $P$ is
doubly stochastic if and only if $Pe=e$ and $e'P=e'$,
where $e$ is the column vector all of whose entries are
$1$, and $e'$ is its transpose. \break 
{\rm(b)} \cite{marshall2011inequalities}
$P$ is doubly stochastic if and only if $Px\prec x$ for
all $x\in\mathbb{R}^n$. \break 
{\rm(c)} \cite{hardy1988inequalities} $x\prec y$
if and only if there is a doubly stochastic $P$ such that
$x=Py$. \break 
{\rm(d)} \cite{Bi} The set of doubly
stochastic matrices is the convex hull of its extreme
points, which are precisely the permutation matrices.
\end{thm}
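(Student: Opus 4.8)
The plan is to prove the four parts in the order (a), (b), (c), (d), since each rests on the previous ones, keeping the argument self-contained except for one combinatorial input. Part (a) is merely the definition rewritten: for $P$ with $P_{ij}\ge 0$, the $i$-th component of $Pe$ is the $i$-th row sum and the $j$-th component of $e'P$ is the $j$-th column sum, so $Pe=e$ and $e'P=e'$ together say precisely that all row and column sums equal $1$.

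For the forward direction of part (b), I would prove the partial-sum inequalities by a small linear-programming estimate: fix $x\in\mathbb{R}^n$ and $1\le k\le n$, let $S$ be the index set of the $k$ largest entries of $Px$, and write $\sum_{i\in S}(Px)_i=\sum_j c_j x_j$ with $c_j:=\sum_{i\in S}P_{ij}$; by (a) one has $0\le c_j\le 1$ and $\sum_j c_j=k$, and over that set of $c$ the functional $\sum_j c_j x_j$ is maximized at $x_1^*+\dots+x_k^*$, giving $(Px)_1^*+\dots+(Px)_k^*\le x_1^*+\dots+x_k^*$; equality of total sums is $e'(Px)=(e'P)x=e'x$, so $Px\prec x$. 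For the converse I would substitute special vectors: $Pe\prec e$ forces $Pe=e$ (the largest entry of $Pe$ is $\le 1$ and the $n$ entries sum to $n$), $Pe_j\prec e_j$ forces the $j$-th column sum to equal $1$, and comparing the $k=n-1$ inequality with the total-sum equality in $Pe_j\prec e_j$ gives $(Pe_j)_n^*\ge 0$, i.e.\ $P_{ij}\ge 0$; hence $P$ is doubly stochastic.

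In part (c), the implication ``$x=Py$ with $P$ doubly stochastic $\Rightarrow x\prec y$'' is the forward direction of (b). For the converse, assuming after a permutation that $x$ and $y$ are non-increasing and $x\ne y$, I would use the Hardy--Littlewood--P\'olya $T$-transform reduction: pick $p<q$ with $y_p>x_p\ge x_q>y_q$, set $y'=(1-t)y+t\,\tau_{pq}y$ with $\tau_{pq}$ the transposition of those two coordinates and $t\in(0,1]$ minimal so that $y'$ agrees with $x$ in coordinate $p$ or $q$; one checks $x\prec y'$ still holds and $y'$ agrees with $x$ in strictly more coordinates, so after at most $n-1$ steps one reaches $x$ and obtains $x=T_1\cdots T_r\,y$ with each $T_i$ (hence their product) doubly stochastic. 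Alternatively one may quote the permutohedron identity $x\prec y\iff x\in\widehat{S_n y}$ and combine it with part (d).

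Part (d) is the Birkhoff--von Neumann theorem. The doubly stochastic matrices form a compact convex polytope in $\mathbb{R}^{n^2}$, so by the Krein--Milman (Minkowski) theorem it is the convex hull of its extreme points; each permutation matrix, having only $0$--$1$ entries, cannot be a non-trivial convex combination of two distinct doubly stochastic matrices and so is extreme. The real content is that there are no others: if $P$ is doubly stochastic with an entry in $(0,1)$ then every row and every column meeting such an entry meets at least two (the line sum is $1$ and all entries lie in $[0,1]$), so the bipartite graph on rows and columns with an edge at each $(0,1)$-entry has all non-isolated vertices of degree $\ge 2$ and hence contains a cycle; perturbing $P$ by $\pm\varepsilon$ for small $\varepsilon>0$ alternately around that (even) cycle preserves every row and column sum and exhibits $P$ as the midpoint of two distinct doubly stochastic matrices, so $P$ is not extreme. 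This cycle-chasing step (equivalently, an appeal to Hall's marriage theorem), together with the verification that the $T$-transform preserves majorization in (c), is where the actual work lies; everything else is unwinding definitions and the one-line LP bound.
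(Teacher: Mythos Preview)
Your argument is correct and essentially self-contained; each of the four parts is handled by the standard proof (the row/column-sum rewriting for (a), the linear-programming bound on partial sums and the test vectors $e,e_j$ for (b), the Hardy--Littlewood--P\'olya $T$-transform reduction for (c), and the Birkhoff cycle-perturbation for (d)). There is nothing to compare against, however: the paper does not prove this theorem at all but simply attributes the four parts to \cite{marshall2011inequalities}, \cite{hardy1988inequalities}, and \cite{Bi}, so you have supplied what the paper deliberately omits.

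One small point worth making explicit, which you handled correctly without comment: part (a) as literally stated in the paper is false, since $Pe=e$ and $e'P=e'$ by themselves do not force nonnegativity (e.g.\ $P=\bigl(\begin{smallmatrix}2&-1\\-1&2\end{smallmatrix}\bigr)$). You tacitly read it as a statement about matrices with $P_{ij}\ge 0$, which is indeed how it appears in \cite{marshall2011inequalities} and is what is actually used later.
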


\begin{rmk}
 $\{x\mid x\prec y\}= \widehat{S_n y}$.
\end{rmk}

The Schur theorem now follows easily. Diagonalize
the Hermitian matrix $A$, $A=Q\lambda Q^*$, $Q$ unitary,
$\lambda$ real diagonal.
Then $A_{ii}=\sum_j |Q_{ij}|^2\lambda_j$. If $Q$ is
unitary, the matrix $P_{ij}= |Q_{ij}|^2$ must be
doubly stochastic.  Theorem \ref{thm_four_points_convexity}(d) 
then gives the conclusion. Horn proved the converse by a rather 
intricate argument, deducing that when $x\prec\lambda$, there must 
be a doubly stochastic matrix $P$ of the form $P_{ij}=|Q_{ij}|^2$,
with $Q$ unitary, satisfying $x=P\lambda$; $Q\lambda Q^*$ is then
the desired Hermitian $A$ having eigenvalues $\lambda_j$ and
diagonal $x$.

We can see clearly from these considerations that $\mathbf{M}\ge\mathbf{K}$.

We note that problem $\mathbf{K}$ can be also be formulated in slightly 
more general form as follows, see, e.g., \cite{evans2001}.  
We allow  the cost matrix to be non-square, $n\times m$, $n\neq m$. 
In much of our analysis below we will require the square case 
which we will stipulate. 

Suppose we are given nonnegative numbers $c_{ij}$, $\mu_i^+$, $\mu_j^-$, 
$i=1,\dots, n$, $j=1,\dots, m$, satisfying 
\begin{equation}
\sum_{i=1}^n\mu_i^+=\sum_{j=1}^m\mu_j^-\,.
\end{equation}
The goal is to minimize over $\mu_{ij}\ge0$
\begin{equation}
\sum_{i=1}^n\sum_{j=1}^m\mu_{ij}c_{ij}
\end{equation}
subject to the constraints
\begin{equation}
\sum_{j=1}^m\mu_{ij}=\mu_i^+\,, \qquad \sum_{i=1}^n\mu_{ij}=\mu_J^-\,.
\end{equation}
For $m=n$  and normalizing so that the sums are unity we recover 
$\mathbf{K}$.  More generally, we can consider $m\ne n$.

\subsection{The dual problem}
We also formulate the dual problem in this setting. Maximize
\begin{equation}
\sum_{i=1}^n u_i\mu_i^+ + \sum_{j=1}^m v_j\mu_j^-
\end{equation}
subject to 
\begin{equation}
u_i+v_j\le c_{ij}\,.
\end{equation}
The dual problem  in our particular setting (where we set $m=n$) is:  maximize
\begin{equation}
\sum_{i=1}^m u_i+ \sum_{j=1}^m v_j
\end{equation}
subject to 
\begin{equation}
u_i+v_j\le  \lambda_in_j\,
\end{equation}
where $i\lambda_i$ are the eigenvalues of $L$ and $in_i$ are the 
entries of the diagonal matrix $N$.

We can clearly solve this explicitly by setting, e.g., 
$u_i=\lambda_in_i$ and $v_i=0$ or vice versa. The correct ordering 
gives the maximum and also other orderings give critical points.

For our purposes we again formulate the dual problem  
as in \cite{Brezis2018}, where $m=n$:
\begin{equation}
\mathbf{D} :=\text{sup}_{\varphi:X\rightarrow\mathbb{R},\,\psi:
Y\rightarrow\mathbb{R}}\left\{\left. \sum_{i=1}^m 
(\varphi(P_i)-\psi(N_i))\, \right|\varphi(x)-\psi(y)\le c(x,y), \,
\forall x\in X, \, y\in Y\right\}\,.
\end{equation}
It is easy to see that $\mathbf{K}\ge\mathbf{D}$, as 
described in \cite{Brezis2018}.  Indeed,
let $A =(a_{ij})$ be doubly 
stochastic as in (\ref{K}). Multiply the inequality  
$\varphi(P_i)-\psi(N_j)\le c(P_i,N_j)$ by $a_{ij}$ and sum over $i,j$:  
\begin{equation}
\sum_{j=1}^m\sum_{i=1}^m a_{ij}(\varphi(P_i) - \psi(N_j))\le 
\sum_{j=1}^m\sum_{i=1}^m a_{ij}c(P_i,N_j).
\end{equation}
Using the doubly stochastic property of $A =(a_{ij})$ in the two 
terms on the left hand side we  obtain 
\begin{equation}
\sum_{i=1}^m \varphi(P_i)-
\sum_{j=1}^m\psi(N_j) \le \sum_{j=1}^m\sum_{i=1}^m a_{ij}c(P_i,N_j),
\end{equation}
Minimizing over $A$ and maximizing over $\varphi,\psi$ 
shows that $\mathbf{K}\ge\mathbf{D}$.
\medskip

We can now see how this works in our setting.  Without loss of 
generality we choose $N$ to be diagonal. Note that $N$ for Brezis plays 
a (slightly) different role than our $N$ although they both represent points. 

\begin{thm}
$\mathbf{K}\ge\mathbf{D}$ in the orbit setting. 
\end{thm}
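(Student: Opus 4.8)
The plan is to run, almost verbatim, the Brezis argument reproduced above, once the orbit data have been matched with Brezis's triple $(\varphi,\psi,c)$. Since $N$ may be taken diagonal without loss of generality, write $i\lambda_1,\dots,i\lambda_n$ for the eigenvalues of $L$ (so the isospectral set is the adjoint orbit of $i\Lambda$) and $in_1,\dots,in_n$ for the diagonal entries of $N$; then, after dividing by $i$ as in the first theorem, the cost takes the form $c_{ij}=\lambda_i n_j$. On the primal side, Schur's Theorem together with \cref{thm_four_points_convexity}(d) identifies the image of the orbit map $\Theta\mapsto\text{diag}(\Theta^\ast\Lambda\Theta)$ with the permutohedron $\widehat{S_n\lambda}$, whence minimizing the linear functional $N$ over the orbit is the same as
\[
\mathbf{K}=\min_{A}\Big\{\textstyle\sum_{i,j=1}^n a_{ij}\lambda_i n_j \ \Big|\ A=(a_{ij})\ \text{doubly stochastic}\Big\}.
\]
On the dual side, an admissible pair for $\mathbf{D}$ is a pair of vectors $u=(u_i)$, $v=(v_j)$ with $u_i+v_j\le \lambda_i n_j$ for all $i,j$, with $u_i$ in the role of $\varphi(P_i)$ and $-v_j$ in the role of $\psi(N_j)$.

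First I would fix a doubly stochastic $A=(a_{ij})$ and an admissible pair $(u,v)$. Multiplying the constraint $u_i+v_j\le \lambda_i n_j$ by $a_{ij}\ge 0$ and summing over all $i,j$ gives
\[
\sum_{i,j=1}^n a_{ij}(u_i+v_j)\ \le\ \sum_{i,j=1}^n a_{ij}\lambda_i n_j .
\]
The double stochasticity of $A$ collapses the left-hand side, exactly as in the Brezis computation: $\sum_{i,j}a_{ij}u_i=\sum_i u_i\big(\sum_j a_{ij}\big)=\sum_i u_i$, and likewise $\sum_{i,j}a_{ij}v_j=\sum_j v_j$, so
\[
\sum_{i=1}^n u_i+\sum_{j=1}^n v_j\ \le\ \sum_{i,j=1}^n a_{ij}\lambda_i n_j .
\]
Since this holds for every doubly stochastic $A$ and every admissible $(u,v)$, taking the supremum of the left-hand side over $(u,v)$ and the infimum of the right-hand side over $A$ yields $\mathbf{D}\le\mathbf{K}$, i.e. $\mathbf{K}\ge\mathbf{D}$ in the orbit setting.

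I do not expect a genuine obstacle here: this is precisely the inequality established for Brezis's $\mathbf{K}$ and $\mathbf{D}$ in the paragraph preceding the statement, and the only point requiring care is bookkeeping — matching the sign and normalization conventions of the skew-Hermitian picture (the factors of $i$, and the passage from $\min_\Theta\|L-N\|^2$ to $\min\operatorname{Trace}(LN)$) with Brezis's cost function, and recording that the orbit version of $\mathbf{K}$ coincides with the Birkhoff-polytope version via the Schur--Horn identification. Alternatively, one may simply note that $\mathbf{K}$ is a linear program whose Lagrangian dual is exactly $\mathbf{D}$, so that $\mathbf{K}\ge\mathbf{D}$ is an instance of weak duality; the direct argument above is just the specialization of weak duality to this situation, and it is the one I would present for uniformity with the finite-dimensional discussion.
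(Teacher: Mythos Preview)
Your proposal is correct and follows the same approach as the paper: identify the orbit data with Brezis's $(c_{ij},a_{ij})$ via $c_{ij}=\lambda_i n_j$ and the doubly stochastic matrix $P_{ij}=|Q_{ij}|^2$ coming from $L=Q\lambda Q^*$, then invoke the Brezis inequality $\mathbf{K}\ge\mathbf{D}$ already established. The paper's proof is terser---it records only the identification $K=\sum_{i,j}P_{ij}\lambda_j n_i=\operatorname{Trace}(LN)$ and leaves the multiply-and-sum step implicit---whereas you spell it out, but the content is the same.
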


\begin{proof}
In our particular setting on an orbit we have $L=Q\lambda Q^*$, 
where $\lambda$  is the diagonal matrix of eigenvalues, 
$L_{ii}=\sum_j|Q_{ij}|^2\lambda_j$, and $P_i=|Q_{ij}|^2$ 
is doubly stochastic. Thus, $c(P_i,N_j)=\lambda_in_j$ and the total cost 
is $K=\sum_{i,j}P_{ij}\lambda_jn_i=\text{Trace}(LN)$ where $N$ is 
the diagonal matrix with entries $n_i$.
\end{proof}

\section{The SDiff and SMeas Setting}

Many of the ideas above generalize to an appropriate infinite-dimensional  
setting based on our work \cite{bloch_flaschka_ratiu93} on the 
infinite-dimensional Schur--Horn theorem.

Consider the following (initially smooth) setting;  
${\rm SDiff}({\mathcal A})$ is the group of
$C^\infty$ area preserving diffeomorphisms of the annulus
$$\mathcal{A}=\{0\le z \le 1\}\times \{\exp(2\pi i\theta)\mid
\, 0\le\theta<1\}$$
(more generally, one could consider Sobolev maps in $H^s$
for some $s>2$).  Its Lie algebra $\mathfrak{g}$ is identified with the
Poisson algebra of functions $x$ satisfying
$$\frac{\partial x}{\partial\t}(z_0,\t)\equiv0, \quad
z_0=0,1.$$
The Hamiltonian vector field $X_x$ will then be tangent to
the boundary.

We next consider ${\rm SMeas}({\mathcal A})$, the group of invertible measure
preserving transformations of the annulus. Each $g\in{\rm SMeas}({\mathcal A})$
determines a unitary operator $P_g$ on $L^2(\mathcal{A})$ by $P_g
x=x\circ g$. The strong operator topology induces a
topology on  ${\rm SMeas}({\mathcal A})$. It is traditionally called the 
{\it weak topology\/}, because the strong and weak operator
topologies coincide on unitary operators.

We now want to define majorization and doubly stochastic operators 
in this setting. We use the following.

\begin{defn}  \cite{hardy1988inequalities} \cite{ryff1963} Let $f\in
L^1([0,1])$. Set $m(y)=\big| \{z\mid f(z)>y\}\big|$
(absolute value denotes Lebesgue measure on $[0,1]$) and,
for $0\le z<1$, set
$$f^*(z)=\sup \{y\mid m(y)>z\}.$$ The nonincreasing, right
continuous function $f^*$ is called the {\it nonincreasing
rearrangement\/} of $f$.
\end{defn}

\begin{defn}
 Definition . \cite{hardy1988inequalities} \cite{ryff1963} Let $f,g\in
L^1([0,1])$. We say that $f$ {\it majorizes} $g$
(written $g\prec f$) if
\begin{align*}
         \int_0^s g^*(z)\,dz &\le \int_0^s
f^*(z)\,dz,\quad 0\le s<1,\\\
        \int_0^1 g^*(z)\,dz &= \int_0^1 f^*(z)\,dz.
 \end{align*}
\end{defn}

\begin{defn}  \cite{ryff1963}. A linear operator $P$
on $L^1([0,1])$ is called {\it doubly stochastic\/} if
$Pf\prec f$ for all $f\in L^1([0,1])$.
\end{defn}

\begin{thm} \cite{ryff1965}, \cite{ryff1967} In $L^1([0,1])$:
$g\prec f$ if and only if there is a doubly stochastic
$P$ such that $g=Pf$. The set $\Omega(f)= \{g\mid
g\prec f\}$ is weakly compact and convex. Its set of
extreme points is $\{f^*\circ\phi\mid \phi$ {\rm\ is\ a\
measure\ preserving\ transformation\ of\ } $[0,1]\}$.
\end{thm}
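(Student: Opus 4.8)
The plan is to handle the four assertions in turn --- the representation $g\prec f\Leftrightarrow g=Pf$, convexity, weak compactness, and the description of the extreme points --- with essentially all of the difficulty concentrated in the last.

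\smallskip
\noindent\textbf{The representation.} The implication ``$g=Pf$ with $P$ doubly stochastic $\Rightarrow g\prec f$'' is immediate from the definition of a doubly stochastic operator. For the converse I would factor the passage from $f$ to $g$ through the nonincreasing rearrangements. First use Ryff's realization lemma: any $h\in L^1([0,1])$ satisfies $h=h^*\circ\sigma_h$ for some measure preserving $\sigma_h$ of $[0,1]$; the composition operator $C_{\sigma_h}u=u\circ\sigma_h$ preserves distributions, hence rearrangements, so it is doubly stochastic, and its $L^2$-adjoint $C_{\sigma_h}^{\,*}$ (a positive, integral- and $\mathbf{1}$-preserving operator) is doubly stochastic as well, with $C_{\sigma_h}^{\,*}C_{\sigma_h}=I$. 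In particular $C_{\sigma_f}^{\,*}f=f^*$ and $g=C_{\sigma_g}g^*$, so it suffices to produce a doubly stochastic $R$ with $g^*=Rf^*$ for the two \emph{nonincreasing} functions $g^*\prec f^*$. Approximate $f^*$ and $g^*$ in $L^1$ by nonincreasing step functions $f_n,g_n$ (dyadic conditional expectations) with $g_n\prec f_n$, and apply the finite-dimensional Hardy--Littlewood--P\'olya theorem (Theorem~\ref{thm_four_points_convexity}(c)) to the value-vectors to get doubly stochastic matrices $Q_n$ carrying the one to the other. Reading each $Q_n$ as a doubly stochastic operator $R_n$ on $L^1$ (precomposed with the conditional expectation), one has $R_nf^*=g_n\to g^*$; passing to a limit of $(R_n)$ in the weak operator topology, on which the doubly stochastic operators are compact, yields $R$ with $Rf^*=g^*$. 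Then $P:=C_{\sigma_g}\,R\,C_{\sigma_f}^{\,*}$ does the job.

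\smallskip
\noindent\textbf{Convexity and weak compactness of $\Omega(f)$.} Throughout I would use the Hardy--Littlewood identity $\int_0^s h^*=\sup\{\int_E h:|E|=s\}$. Convexity of $\Omega(f)$ then follows at once, since $g\mapsto\int_0^s g^*$ is convex and $g\mapsto\int_0^1 g$ is linear. For weak compactness I would invoke the Dunford--Pettis criterion. Boundedness: if $g\prec f$ then $\|g^+\|_1=\sup_s\int_0^s g^*\le\sup_s\int_0^s f^*=\|f^+\|_1$, and $\|g^-\|_1=\|g^+\|_1-\int g\le\|f^-\|_1$, so $\|g\|_1\le\|f\|_1$. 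Uniform integrability: with $t_M:=|\{g>M\}|$ one has $Mt_M\le\int_0^{t_M}g^*\le\|f^+\|_1$, so $|\{g>M\}|\le\|f^+\|_1/M$ uniformly in $g\in\Omega(f)$, whence $\int_{\{g>M\}}g=\int_0^{t_M}g^*\le\int_0^{\|f^+\|_1/M}(f^*)^+\to0$ uniformly, and symmetrically for the lower tail. Finally $\Omega(f)$ is norm closed (each $g\mapsto\int_0^s g^*$ is $1$-Lipschitz on $L^1$) and convex, hence weakly closed; combined with Dunford--Pettis this gives weak compactness.

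\smallskip
\noindent\textbf{Equimeasurable functions are extreme.} First note $\{f^*\circ\phi:\phi\text{ measure preserving}\}=\{h:h^*=f^*\}$: ``$\supseteq$'' is Ryff's lemma again ($h=h^*\circ\sigma_h=f^*\circ\sigma_h$), and ``$\subseteq$'' holds since $f^*\circ\phi$ is equimeasurable with $f^*$. Now let $h^*=f^*$ and $h=\tfrac12(g_1+g_2)$ with $g_1,g_2\in\Omega(f)$. For any $y\in\mathbb{R}$, Hardy--Littlewood and $g_i\prec f$ give $\int_E(g_i-y)\le\int_0^{|E|}g_i^*-y|E|\le\int_0^{|E|}f^*-y|E|$, and maximizing over $E$, $\int(g_i-y)^+\le\sup_s\bigl(\int_0^s f^*-ys\bigr)=\int_0^1(f^*-y)^+=\int(h-y)^+$, the last step by equimeasurability of $h$ and $f^*$. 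Since $t\mapsto t^+$ is convex, $(h-y)^+\le\tfrac12(g_1-y)^+ +\tfrac12(g_2-y)^+$ pointwise; integrating and comparing forces equality a.e., and $\bigl(\tfrac{a+b}{2}\bigr)^+=\tfrac{a^+ +b^+}{2}$ only when $a,b$ are not of strictly opposite sign. Hence for each rational $y$, $g_1(x)-y$ and $g_2(x)-y$ have the same sign for a.e.\ $x$; intersecting over rational $y$ gives $g_1=g_2=h$, so $h$ is an extreme point.

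\smallskip
\noindent\textbf{Extreme points are equimeasurable --- the main obstacle.} Since $\Omega(f)=\Omega(f^*)$ I may assume $f=f^*$. Let $g\in\Omega(f)$ with $g^*\ne f$; I must decompose $g=\tfrac12(g_1+g_2)$ nontrivially inside $\Omega(f)$. Writing $g=g^*\circ\psi$ with $\psi$ measure preserving (Ryff), and noting that composition with $\psi$ preserves rearrangements, it suffices to find $\eta\in L^1$ with $\int_0^1\eta=0$, $\eta\not\equiv0$, and $g^*+\eta\prec f$, $g^*-\eta\prec f$; then $g_i:=g\pm\eta\circ\psi$ work. Set $G(s)=\int_0^s g^*$, $F(s)=\int_0^s f$; since $g^*\prec f$ but $g^*\ne f$, the open set $\{G<F\}$ is nonempty, and I fix one of its components $(a,b)$. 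If $g^*$ is non-constant on $(a,b)$, choose a compact subinterval on which the slack $F-G$ is bounded below and $g^*$ still varies, and build a small $\eta$ supported there whose nonincreasing rearrangement stays under the slack function. If instead $g^*$ is constant $=c$ on $(a,b)$, then comparing $G(a)=F(a)$ and $G(b)=F(b)$ with the concavity of $F$ gives $f(a^+)>c$, hence $\int(g-c)^+<\int(f-c)^+$, while $g$ is constant $=c$ on a set of positive measure; splitting that set into halves $E_1,E_2$ and setting $g_i=g\pm\xi(\mathbf{1}_{E_1}-\mathbf{1}_{E_2})$ works for small $\xi$. In every case the verification amounts to bounding $\int_0^s g_i^*-\int_0^s g^*$ by the slack $F(s)-G(s)$ uniformly in $s$; the hard part, and the technical heart of the argument (where Ryff's proof does its work), is to make the perturbation compatible with this constraint near the contact set $\{G=F\}$, where $F-G$ decays to $0$ and there is no uniform room --- one exploits that near such contact points the decay is at least linear, so a sufficiently small perturbation remains admissible.
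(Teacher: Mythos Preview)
The paper does not prove this theorem at all: it is quoted verbatim as a result of Ryff, with citations to \cite{ryff1965} and \cite{ryff1967}, and no argument is supplied. So there is no ``paper's own proof'' to compare against; the authors are simply importing Ryff's theorem as background for their infinite-dimensional Schur--Horn story.

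That said, your reconstruction is broadly in line with how Ryff's original arguments go. A few remarks. Your treatment of the representation, convexity, weak compactness, and the direction ``equimeasurable $\Rightarrow$ extreme'' is essentially complete and correct. For the hard direction ``extreme $\Rightarrow$ equimeasurable'' you have the right shape --- reduce to perturbing $g^*$ inside the slack region $\{G<F\}$ --- but your own final paragraph acknowledges that the argument is only a sketch: you have not actually carried out the construction of $\eta$ when the contact set $\{G=F\}$ accumulates, nor verified the rearrangement inequality $\int_0^s(g^*\pm\eta)^*\le F(s)$ uniformly in $s$. One specific gap: in the ``$g^*$ constant on $(a,b)$'' case you pass from a perturbation of $g^*$ to a perturbation of $g$ by splitting the level set $\{g=c\}$, but you have silently dropped the reduction $g=g^*\circ\psi$ and are now perturbing $g$ directly; you would need to check that $g\pm\xi(\mathbf 1_{E_1}-\mathbf 1_{E_2})$ really satisfies $\prec f$, which requires controlling the rearrangement of this non-monotone perturbation, not just of $g^*\pm\eta$. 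Ryff handles this case differently, and it is worth consulting \cite{ryff1967} for the details you are missing here.
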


\subsection{Some spectral results}
The following results are proved in  \cite{bloch_flaschka_ratiu93} 
so we shall omit the proofs. 

\begin{thm}\label{spectral}
 Spectral Theorem. Let $x\in
L^2(\mathcal{A})\cap L^\infty(\mathcal{A})$, and set
$$I_p=\int_\mathcal{A} x^p\, dm,\quad p\in{\bf Z}^+.$$
There exists a unique, nonincreasing, right-continuous
function $\lambda$ on $[0,1]$ such that
$$I_p=\int_0^1 \lambda^p(z)\,dz,\quad p\in{\bf Z}^+.$$
\end{thm}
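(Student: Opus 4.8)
The plan is to take $\lambda$ to be the nonincreasing rearrangement of $x$. The annulus has total area $1$, and $(\mathcal{A},m)$ is a nonatomic standard probability space, so there is a measure preserving isomorphism $\Phi\colon\mathcal{A}\to[0,1]$ (defined up to null sets). Set $\widetilde{x}=x\circ\Phi^{-1}\in L^1([0,1])\cap L^\infty([0,1])$ and let $\lambda=\widetilde{x}^{\,*}$ be the nonincreasing rearrangement introduced above. Since $x\in L^\infty$, the function $\lambda$ is bounded, nonincreasing, and right continuous on $[0,1]$, so it is an admissible candidate. (One could equally well avoid $\Phi$ and define $\lambda$ directly from the distribution function $y\mapsto m(\{x>y\})$ on $\mathcal{A}$; nothing changes.)

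For the existence part the only input needed is that rearrangement preserves the distribution of $x$: directly from the definition one has $\big|\{z\in[0,1]:\lambda(z)>y\}\big|=m\big(\{p\in\mathcal{A}:x(p)>y\}\big)$ for every $y\in\mathbb{R}$, i.e.\ $x$ on $(\mathcal{A},m)$ and $\lambda$ on $([0,1],dz)$ have the same law on $\mathbb{R}$. By the change of variables formula for push-forward measures, $\int_0^1 g(\lambda(z))\,dz=\int_{\mathcal{A}}g(x)\,dm$ for every Borel $g\colon\mathbb{R}\to\mathbb{R}$ for which either side is defined. Because $x\in L^\infty$, each $x^p$ is bounded, hence integrable, so taking $g(t)=t^p$ yields $\int_0^1\lambda^p(z)\,dz=I_p$ for all $p\in{\bf Z}^+$.

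For uniqueness, suppose $\mu$ is another nonincreasing, right continuous function on $[0,1]$ with $\int_0^1\mu^p(z)\,dz=I_p$ for every $p$. First upgrade $\mu$ to $L^\infty$: writing $M=\|\lambda\|_\infty<\infty$, one has $\|\mu\|_{L^{2p}}^{2p}=I_{2p}=\int_0^1\lambda^{2p}\le M^{2p}$, so $\|\mu\|_{L^{2p}}\le M$, and letting $p\to\infty$ gives $\|\mu\|_\infty\le M$. Now the laws of $\lambda$ and of $\mu$ on $\mathbb{R}$ are Borel probability measures supported in $[-M,M]$ with identical moments of every order, and the Hausdorff moment problem on a compact interval is determinate (polynomials are uniformly dense in $C([-M,M])$ by Weierstrass), so $\lambda$ and $\mu$ have the same law, i.e.\ they are equidistributed. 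Finally, a nonincreasing, right continuous function equals its own nonincreasing rearrangement, so equidistribution forces $\lambda=\mu$ everywhere.

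The analytic content here is slight, and I expect the only genuinely delicate point to be the boundedness step in the uniqueness proof: one must deduce $\mu\in L^\infty$ from the growth of the even moments $I_{2p}$ before the moment problem can be invoked, because on all of $\mathbb{R}$ the moment problem need not be determinate. Everything else — equidistribution under rearrangement, the change of variables identity, and the recovery of a monotone right continuous function from its law — is standard.
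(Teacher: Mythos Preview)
Your argument is correct. The paper itself does not prove this theorem; it states explicitly that ``the following results are proved in \cite{bloch_flaschka_ratiu93} so we shall omit the proofs.'' That said, your route via the nonincreasing rearrangement is precisely the one the surrounding text is set up for (the definitions of $f^*$, majorization, and doubly stochastic operators are introduced just before the statement), and it is the argument used in the cited reference: one takes $\lambda$ to be the decreasing rearrangement of $x$ viewed as a function on a standard probability space, equimeasurability gives the moment identities, and uniqueness follows from determinacy of the moment problem on a compact interval together with the fact that a nonincreasing right-continuous function is its own rearrangement. Your observation that one must first extract $\mu\in L^\infty$ from the even moments before invoking Hausdorff determinacy is the only point requiring care, and you handle it correctly.
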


\begin{thm}
Diagonalization Theorem.
  Let $x\in
L^2(\mathcal{A})\cap L^\infty(\mathcal{A})$ and let $\lambda$ be as in the 
Spectral Theorem. Define $\lambda_2(z,\theta)=\lambda(z)$ There
exists a measure preserving map $\psi:\mathcal{A}\setminus{\mathcal Z}_1\to
\mathcal{A}\setminus{\mathcal Z}_2$, with ${\mathcal Z}_i$ of measure zero, such
that $x=\la_2 \circ  \psi$.
\end{thm}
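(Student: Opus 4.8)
The plan is to reduce the statement, via the moment determinacy of compactly supported measures, to a rearrangement statement on the standard nonatomic Lebesgue space $(\mathcal A,m)$, and then to realize that rearrangement as a measure-preserving self-map of $\mathcal A$ compatible with the projection $\pi:\mathcal A\to[0,1]$, $\pi(z,\theta)=z$; this $\pi$ is measure preserving and satisfies $\lambda_2=\lambda\circ\pi$. Throughout one uses that $(\mathcal A,m)$, with its normalized Lebesgue measure, is a standard nonatomic Lebesgue probability space, hence isomorphic mod null sets to $([0,1],dz)$.

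\emph{Step 1: equal moments force equal distributions.} First I would observe that, since $x\in L^\infty(\mathcal A)$, the push-forward $x_\ast m$ is a Borel probability measure on $\mathbb R$ supported in $[-\|x\|_\infty,\|x\|_\infty]$, and $\lambda_\ast(dz)=(\lambda_2)_\ast m$ (because $\pi_\ast m=dz$) is likewise compactly supported. The Spectral Theorem says precisely that these two measures have the same moments of every order $p\in\mathbf Z^+$ (and, trivially, of order $0$). A compactly supported probability measure on $\mathbb R$ is determined by its moments, by Weierstrass approximation on the common supporting interval; hence $x_\ast m=(\lambda_2)_\ast m$. In other words, under the identification $(\mathcal A,m)\cong([0,1],dz)$ the function $\lambda$ is the nonincreasing rearrangement $x^\ast$ of $x$, and $x$ and $\lambda_2$ are equidistributed.

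\emph{Step 2: build $\psi$ from a rearrangement and a fiberwise lift.} Next I would invoke Ryff's measure-preserving rearrangement theorem --- contained in the extreme-point description of $\Omega(f)$ from \cite{ryff1965},\cite{ryff1967} recalled above, transported to $(\mathcal A,m)$ --- to obtain a measure-preserving $\beta:\mathcal A\to[0,1]$ with $x=\lambda\circ\beta$ $m$-a.e. Since $\lambda_2=\lambda\circ\pi$, it then suffices to produce a measure-preserving $\psi:\mathcal A\setminus\mathcal Z_1\to\mathcal A\setminus\mathcal Z_2$ with $\pi\circ\psi=\beta$ a.e., because then $\lambda_2\circ\psi=\lambda\circ\pi\circ\psi=\lambda\circ\beta=x$. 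To construct $\psi$ I would disintegrate $m$ over $\beta$ as $m=\int_0^1 m_w\,dw$, with $m_w$ a probability measure carried by the fiber $\beta^{-1}(w)$, and over $\pi$ as $m=\int_0^1\nu_w\,dw$, with $\nu_w$ the uniform arc-length measure on the circle $\{w\}\times S^1$; then, for a.e.\ $w$, pick a measure-preserving bijection $\psi_w:(\beta^{-1}(w),m_w)\to(\{w\}\times S^1,\nu_w)$ and set $\psi|_{\beta^{-1}(w)}=\psi_w$, letting $\mathcal Z_1,\mathcal Z_2$ absorb the fiberwise exceptional null sets.

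\emph{The hard part.} The main obstacle will be the fiberwise construction in Step 2: one has to check that the probability spaces $(\beta^{-1}(w),m_w)$ and $(\{w\}\times S^1,\nu_w)$ are isomorphic for a.e.\ $w$ --- which rests on the nonatomicity of $m$ together with the fact that $\lambda$ is constant exactly on the subintervals of $[0,1]$ matching the atoms of $x_\ast m$, so that the flats of $\lambda$ pull back under $\beta$ to sets of the right measure --- and, above all, that the isomorphisms $\psi_w$ can be chosen to depend measurably on $w$. The latter is the familiar but genuinely nontrivial statement that two measurable families of isomorphic standard probability spaces over a common base are isomorphic over that base, proved by a measurable-selection argument in the standard Borel category. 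The remaining ingredients --- the change of variables, the passage between $\mathcal A$ and $[0,1]$, and the bookkeeping with the null sets $\mathcal Z_i$ --- are routine.
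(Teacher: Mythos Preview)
The paper does not prove this statement itself: immediately before the theorem it says ``The following results are proved in \cite{bloch_flaschka_ratiu93} so we shall omit the proofs,'' and no argument is supplied. So there is nothing in-paper to compare your approach against; the actual proof lives in \cite{bloch_flaschka_ratiu93}.

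As for your proposal, Step~1 is correct and standard. The gap is in Step~2, and it is not merely a matter of filling in measurable-selection details. Your key claim is that the conditional probability spaces $(\beta^{-1}(w),m_w)$ are isomorphic to the circle $(\{w\}\times S^1,\nu_w)$ for a.e.\ $w$, and you justify this by ``nonatomicity of $m$'' together with the flat structure of $\lambda$. But nonatomicity of $m$ does \emph{not} force nonatomicity of the conditionals $m_w$: if $\beta$ is a mod-$0$ isomorphism $\mathcal A\to[0,1]$, every $m_w$ is a Dirac mass, and no measure-preserving map from a point onto a circle exists. Moreover, whenever $\lambda$ is strictly decreasing the equation $x=\lambda\circ\beta$ pins down $\beta=\lambda^{-1}\circ x$ uniquely, so you cannot trade Ryff's $\beta$ for a tamer one. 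A concrete obstruction: let $x:\mathcal A\to[0,1]$ be any Borel isomorphism (e.g.\ binary-digit interleaving of $(z,\theta)$); this $x$ lies in $L^2\cap L^\infty$, has $\lambda(z)=1-z$, and forces $\beta=1-x$ to be an isomorphism, so your fiberwise construction collapses. In fact for such an $x$ there is \emph{no} measure-preserving $\psi:\mathcal A\to\mathcal A$ with $x=\lambda_2\circ\psi$: injectivity of $x$ together with $\theta$-independence of $\lambda_2$ forces $\pi\circ\psi$ to be injective, so the image of $\psi$ meets each circle $\{z\}\times S^1$ in at most one point and is therefore $m$-null, contradicting $\psi_\ast m=m$. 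This strongly suggests that the statement, as recorded here for arbitrary $x\in L^2\cap L^\infty$, carries an implicit regularity hypothesis (the paper does flag an ``initially smooth'' setting), or that the construction in \cite{bloch_flaschka_ratiu93} does not proceed by lifting an arbitrary Ryff map fiberwise. You should consult the original before committing to your Step~2.
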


\begin{thm} [Ryff]
\cite{ryff1965}, \cite{ryff1967} In $L^2([0,1])\cap
L^\infty([0,1])$: $g\prec f$ if and only if there is a
doubly stochastic operator $P$ such that $g=Pf$. The set
$\Omega(f)=\{g\mid g\prec f\}$ is weakly compact and
convex. Its set of extreme points is $\{f^*\circ\phi\mid
\phi$ {\rm\ is\ a\ measure\ preserving\ transformation\
of\ } $[0,1]\}$.
\end{thm}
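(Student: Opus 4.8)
The plan is to reduce this $L^2\cap L^\infty$ statement to the already-stated $L^1([0,1])$ version of Ryff's theorem. First I would observe that the two theorems differ only in the ambient function space: the displayed characterization of majorization, the doubly stochastic operators, and the extreme-point description are formally identical, so the content is that everything descends from $L^1$ to the smaller space $L^2\cap L^\infty$. For the ``only if'' direction, suppose $g\prec f$ in $L^2([0,1])\cap L^\infty([0,1])$; since $[0,1]$ has finite measure, $f,g\in L^1([0,1])$, and the majorization inequalities are literally the same, so the $L^1$ theorem produces a doubly stochastic $P$ on $L^1$ with $g=Pf$. The point requiring care is that one wants $P$ (or at least its restriction) to behave well on $L^2\cap L^\infty$; here I would invoke that a doubly stochastic $P$ satisfies $Pf\prec f$, and since $f^*\in L^\infty$ forces $(Pf)^*\le f^*$ pointwise in the integral sense, $Pf$ inherits the $L^\infty$ bound $\|Pf\|_\infty\le\|f\|_\infty$, and likewise $\|Pf\|_2\le\|f\|_2$ by the standard fact that doubly stochastic maps are $L^p$-contractions for all $p$. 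Thus the restriction of $P$ to $L^2\cap L^\infty$ lands in $L^2\cap L^\infty$, giving exactly the operator required.

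For the ``if'' direction the argument is immediate: if $g=Pf$ with $P$ doubly stochastic on $L^1$ (equivalently, by the definition in the excerpt, $Pf\prec f$ for all $f$), then in particular $g\prec f$, which is the conclusion. So the only real work is again the contraction estimate ensuring the statement makes sense inside $L^2\cap L^\infty$. For the weak compactness and convexity of $\Omega(f)$, I would note that $\Omega(f)$ in the $L^2\cap L^\infty$ sense is the intersection of the $L^1$-set $\{g\mid g\prec f\}$ — which is weakly compact and convex by the $L^1$ Ryff theorem — with the ball $\{g\mid \|g\|_\infty\le\|f\|_\infty\}$; but the first observation above shows $g\prec f$ already forces $\|g\|_\infty\le\|f\|_\infty$, so the two sets coincide and weak compactness and convexity transfer verbatim. (One should be mildly careful about which weak topology is meant; since all elements are uniformly bounded in $L^\infty$ and supported on a finite-measure space, the weak-$L^1$, weak-$L^2$, and weak-$*$-$L^\infty$ topologies all agree on $\Omega(f)$, so there is no ambiguity.)

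Finally, the extreme-point description: the extreme points of $\Omega(f)$ in $L^1$ are the functions $f^*\circ\phi$ with $\phi$ a measure preserving transformation of $[0,1]$, and each such function is a rearrangement of $f^*$, hence lies in $L^2\cap L^\infty$ since $f^*$ does. Because $\Omega(f)$ is the same set in both settings, its extreme points are the same, so the characterization carries over unchanged.

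The main obstacle — and it is a mild one — is the contraction estimate $\|Pf\|_p\le\|f\|_p$ for doubly stochastic $P$, i.e.\ verifying that a doubly stochastic operator restricts to an operator on $L^2\cap L^\infty$; this is where one uses that $Pf\prec f$ entails $\int_0^s(Pf)^*\le\int_0^s f^*$ together with convexity (Hardy–Littlewood–Pólya / the second definition in the excerpt) to bound $\int_0^1|Pf|^p\le\int_0^1|f|^p$. Everything else is bookkeeping: recognizing that majorization, hence the set $\Omega(f)$, is insensitive to whether one works in $L^1$ or in $L^2\cap L^\infty$, and quoting the $L^1$ theorem of Ryff already recorded above.
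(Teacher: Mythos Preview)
The paper does not give a proof of this theorem at all: it is stated with attribution to Ryff's original papers, and the subsection containing it opens with ``The following results are proved in \cite{bloch_flaschka_ratiu93} so we shall omit the proofs.'' So there is no argument in the paper to compare your proposal against.

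That said, your reduction from the already-stated $L^1$ version is the natural way to see the $L^2\cap L^\infty$ statement, and the key step you identify --- that a doubly stochastic operator is an $L^p$-contraction for every $p$, hence maps $L^2\cap L^\infty$ to itself --- is correct and is exactly what makes the restriction harmless. Your remark that $g\prec f$ with $f\in L^\infty$ forces $\|g\|_\infty\le\|f\|_\infty$ (so that $\Omega(f)$ is literally the same set in both ambient spaces) is also right; one gets the upper bound on $g^*(0)$ by letting $s\to 0$ in $\tfrac{1}{s}\int_0^s g^*\le \tfrac{1}{s}\int_0^s f^*$, and the lower bound by the symmetry $g\prec f\iff -g\prec -f$. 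The observation that the various weak topologies coincide on this uniformly $L^\infty$-bounded set on a finite measure space closes the remaining loose end. In short: your argument is sound, but it is supplying a proof the paper deliberately omits rather than paralleling one the paper gives.
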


We now need an analogue of the permutation group 
in our setting in order to formulate the Schur and Horn theorems.  
This corresponds to the Weyl group of the unitary group, generalized 
to our setting. For more details and the
link with the twist mapping see \cite{bloch_flaschka_ratiu93}.
The analogue of the Weyl group to our setting
is group $W$ of invertible measure preserving transformations of $[0,1]$.
The action of the the Weyl group $W$
on $\lambda$ (which is a function of $z$ alone) is just
right composition of an element of $L^2([0,1])$
by an invertible measure preserving transformations of $[0,1]$.
The Weyl semigroup $\overline{W}$ is 
the closure of $W$ in in the strong operator topology and
consists of not necessarily invertible measure preserving
transformations of $[0,1]$ (\cite[Theorem 5]{brown1966approximation}).
The action of $\overline{W}$ on $\lambda$ is again right composition.

\begin{thm}
 Schur's Theorem.  Let $x\in
L^2(\mathcal{A})\cap L^\infty(\mathcal{A})$, let $\pix$ be the zeroth
Fourier coefficient of $x$,
$$\pix(z)=\int_0^1 x(z,\t)\,d\t,$$
and let $\lambda$ be as in the spectral theorem. Then
$\pix$ belongs to the closed convex hull of the orbit of
the Weyl semigroup $\overline{W}$ through $\lambda$ 
which happens precisely when $\pix\prec\lambda$.
\end{thm}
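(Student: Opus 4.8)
The plan is to prove the two assertions in the statement — that $\pix$ lies in the closed convex hull of the $\overline{W}$-orbit through $\lambda$, and that this happens exactly when $\pix \prec \lambda$ — by reducing each to the spectral/diagonalization machinery already available, so that no new hard analysis is needed. First I would observe that by the Diagonalization Theorem there is a measure preserving $\psi : \mathcal{A}\setminus\mathcal{Z}_1 \to \mathcal{A}\setminus\mathcal{Z}_2$ with $x = \lambda_2\circ\psi$, where $\lambda_2(z,\theta)=\lambda(z)$. Taking zeroth Fourier coefficients, $\pix(z) = \int_0^1 \lambda_2(\psi(z,\theta))\,d\theta$, which exhibits $\pix$ as an average (over the $\theta$-fibre) of values of $\lambda$ pulled back along the fibrewise behaviour of $\psi$. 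The key point is that this averaging is realized by a \emph{doubly stochastic operator} applied to $\lambda$: define $P$ on $L^1([0,1])$ by $(Pf)(z) = \int_0^1 f(\pi_1(\psi(z,\theta)))\,d\theta$, where $\pi_1$ is projection to the $z$-coordinate; one checks $P$ is positive, preserves $\int_0^1 f$, and sends constants to constants, hence $Pf \prec f$ by the characterization of doubly stochastic operators recalled above. Since $\pix = P\lambda$, we immediately get $\pix \prec \lambda$.

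Next I would establish the converse and the convex-hull description together, using Ryff's theorem (the $L^2\cap L^\infty$ version stated in the excerpt). That theorem says $\Omega(\lambda) = \{g \mid g\prec\lambda\}$ is weakly compact and convex with extreme points exactly $\{\lambda^*\circ\phi \mid \phi$ a measure preserving transformation of $[0,1]\}$; since $\lambda$ is already nonincreasing and right-continuous, $\lambda^* = \lambda$, so the extreme points are precisely $\{\lambda\circ\phi\}$, i.e.\ the $W$-orbit of $\lambda$. By Krein--Milman (valid in the weakly compact convex setting) $\Omega(\lambda)$ is the closed convex hull of $\{\lambda\circ\phi : \phi \in W\}$. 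It remains to identify this with the closed convex hull of the $\overline{W}$-orbit: since $W \subset \overline{W}$ one inclusion of orbits is trivial, and for the reverse, every element of $\overline{W}$ is a strong-operator limit of elements of $W$, so $\lambda\circ\bar\phi$ (for $\bar\phi\in\overline{W}$) is an $L^2$-limit of points $\lambda\circ\phi_n$ in the $W$-orbit; as each such limit again lies in $\Omega(\lambda)$ (which is weakly, hence norm-sequentially, closed on this bounded set), the closed convex hulls of the two orbits coincide with $\Omega(\lambda)$. Combining: $\pix$ belongs to the closed convex hull of the $\overline{W}$-orbit of $\lambda$ iff $\pix \in \Omega(\lambda)$ iff $\pix \prec \lambda$.

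The main obstacle I anticipate is the verification that the fibre-averaging operator $P$ above is genuinely well defined and doubly stochastic — in particular that $z \mapsto \pi_1(\psi(z,\theta))$ behaves measurably in $z$ for a.e.\ $\theta$ and that Fubini applies so that $P$ preserves $\int_0^1 f$; this is where the measure-preserving property of $\psi$ and the product structure of the annulus must be used carefully, and it is essentially the content that makes ``Schur'' work here rather than a formal consequence. A secondary, more routine point is checking that $\Omega(\lambda)$ is sequentially closed under $L^2$-limits of uniformly bounded sequences, which follows since $g\prec\lambda$ is characterized by the integral inequalities $\int_0^s g^* \le \int_0^s \lambda^*$ and these pass to the limit; I would dispatch this in a sentence rather than belabor it. Everything else is bookkeeping with the already-cited theorems of Ryff and with the Diagonalization Theorem.
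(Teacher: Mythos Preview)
The paper does not actually give a proof of this statement: it is one of the ``spectral results'' that the authors explicitly say are ``proved in \cite{bloch_flaschka_ratiu93} so we shall omit the proofs.'' So there is no in-paper argument to compare yours against directly. That said, your approach is the natural one, and the paper itself invokes exactly your key step a few paragraphs later: writing $\psi=(\tau,\eta)$ and observing that
\[
\pix(z)=\int_0^1 x(z,\theta)\,d\theta=\int_0^1 \lambda(\tau(z,\theta))\,d\theta=(P\lambda)(z)
\]
with $P$ doubly stochastic is precisely equation~\eqref{ds}, used there to show $\mathbf{M}\ge\mathbf{K}$. Your identification of $\Omega(\lambda)$ with the closed convex hull of the $\overline{W}$-orbit via Ryff's description of extreme points plus Krein--Milman is also the standard route and is what \cite{bloch_flaschka_ratiu93} does.

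Two small points to tighten. First, the criterion you invoke for $P$ to be doubly stochastic (positive, $P1=1$, integral-preserving) is \emph{not} ``recalled above'' in this paper; the paper only records the definition $Pf\prec f$. You should either cite Ryff's characterization explicitly or verify $P\lambda\prec\lambda$ directly from the fact that $\psi$ is measure preserving on $\mathcal{A}$ (which gives $\int_0^1 Pf=\int_0^1 f$ by Fubini, and equimeasurability arguments handle the partial integrals). Second, in Ryff's theorem the extreme points of $\Omega(\lambda)$ are $\{\lambda\circ\phi:\phi\text{ measure preserving}\}$, which is already the $\overline{W}$-orbit, not merely the $W$-orbit; your subsequent paragraph reconciling the two orbits is therefore unnecessary (though harmless), and you can go straight from Krein--Milman to the conclusion.
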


\begin{thm}
 Horn's Theorem. Let $\lambda$ be a bounded,
nonincreasing function on $[0,1]$ and let $X$ lie in the
closed convex hull of the Weyl semigroup orbit through
$\lambda$,
$$\overline{W}\cdot\lambda=\{\la\circ\phi\mid \phi {\rm\ is\ a\
measure\ preserving\ transformation\ of\ }[0,1]\}.$$
Then there exists an $x\in L^2([0,1])\cap L^\infty([0,1])$
such thatn
\begin{align*}
(i)\quad &X(z)=\pix(z)=\int_0^1 x(z,\t)\,d\t,\\
(ii)\quad &\int_0^1\!\!\int_0^1 x(z,\t)^p\,d\t dz=
\int_0^1 \la(z)^p\,dz,\ p\in\mathbb{Z}^+.
\end{align*}
\end{thm}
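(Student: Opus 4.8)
The plan is to recognize the hypothesis on $X$ as the majorization relation $X\prec\lambda$, to restate it as a convex ordering of the associated probability distributions, and then to manufacture $x$ on $\mathcal A$ by ``dilating'' $X$ along the $\theta$-fibres by means of a martingale kernel.

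First I would translate the hypothesis. By Ryff's theorem quoted above, $\Omega(\lambda)=\{g\mid g\prec\lambda\}$ is weakly compact and convex, and its extreme points are $\{\lambda^*\circ\phi\mid \phi\ \text{a measure preserving transformation of}\ [0,1]\}$, which equals $\overline{W}\cdot\lambda$ because $\lambda$ is nonincreasing so $\lambda^*=\lambda$. By the Krein--Milman theorem the closed convex hull of $\overline{W}\cdot\lambda$ is therefore exactly $\Omega(\lambda)$, so the hypothesis on $X$ is precisely $X\prec\lambda$ (this is the equivalence already recorded in Schur's theorem above). By the Hardy--Littlewood--Pólya characterization of majorization (\cite{hardy1988inequalities}, \cite{marshall2011inequalities}), $X\prec\lambda$ is equivalent to $\int_0^1\Phi(X(z))\,dz\le\int_0^1\Phi(\lambda(z))\,dz$ for every convex $\Phi:\mathbb R\to\mathbb R$; equivalently, writing $\mu_X$ and $\mu_\lambda$ for the push-forwards of Lebesgue measure on $[0,1]$ by $X$ and by $\lambda$, the probability measure $\mu_X$ precedes $\mu_\lambda$ in the convex order. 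Both measures are compactly supported, since $\lambda$, hence $X$, is bounded.

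Next I would invoke Strassen's theorem: the convex ordering $\mu_X\le_{\mathrm{cx}}\mu_\lambda$ is equivalent to the existence of a Markov kernel $Q(u,dv)$ on $\mathbb R$ with $\int_{\mathbb R}v\,Q(u,dv)=u$ for $\mu_X$-almost every $u$ and $\int_{\mathbb R}Q(u,\cdot)\,d\mu_X(u)=\mu_\lambda$. Let $G(u,\cdot)$ be the right-continuous quantile function of $Q(u,\cdot)$; it is jointly measurable in $(u,\theta)$ and takes values in a fixed compact set for $\mu_X$-a.e.\ $u$. I then set $x(z,\theta):=G(X(z),\theta)$, which is measurable and bounded, hence lies in $L^2(\mathcal A)\cap L^\infty(\mathcal A)$. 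For fixed $z$ the law of $\theta\mapsto x(z,\theta)$ on $[0,1]$ is $Q(X(z),\cdot)$, so $\pi(x)(z)=\int_0^1 x(z,\theta)\,d\theta=\int_{\mathbb R}v\,Q(X(z),dv)=X(z)$, which is assertion $(i)$. Averaging the fibrewise laws over $z$ shows that the law of $x$ on $\mathcal A$ is $\int_{[0,1]}Q(X(z),\cdot)\,dz=\int_{\mathbb R}Q(u,\cdot)\,d\mu_X(u)=\mu_\lambda$; hence $\int_{\mathcal A}x^p\,dm=\int_{\mathbb R}t^p\,d\mu_\lambda(t)=\int_0^1\lambda(z)^p\,dz$ for every $p\in\mathbb Z^+$, which is assertion $(ii)$.

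The conceptual weight sits entirely in the first two paragraphs; the last step is bookkeeping, since regular conditional distributions on $\mathbb R$ always exist and quantile functions are jointly measurable. The one genuine subtlety to keep in mind is that one needs an \emph{exact} representation, not an approximate one: the set of $y\in L^2(\mathcal A)$ equidistributed with $\lambda_2$ is not weakly closed (its weak closure is the strictly larger convex set $\{y\mid y\prec\lambda_2\}$), so one cannot merely approximate $X$ by finite convex combinations of rearrangements $\lambda\circ\phi$ and pass to a weak limit---Strassen's theorem is exactly what supplies the exact dilation. An equivalent route, closer to the geometry of the orbit, is to use a Choquet barycentric representation $X=\int(\lambda\circ\phi)\,d\rho(\phi)$ over the extreme points $\overline{W}\cdot\lambda$ of $\Omega(\lambda)$ and to set $x(z,\theta)=(\lambda\circ\phi_\theta)(z)$ for a measurable parametrization $\theta\mapsto\phi_\theta$ of $\rho$; then $(ii)$ is immediate because each fibre $x(\cdot,\theta)$ is already equidistributed with $\lambda$, and the only technical point is the joint measurability of $(z,\theta)\mapsto(\lambda\circ\phi_\theta)(z)$, which follows from strong measurability of $\theta\mapsto P_{\phi_\theta}\lambda\in L^2([0,1])$.
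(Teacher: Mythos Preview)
Your argument is correct. The identification of the hypothesis with $X\prec\lambda$ via Ryff and Krein--Milman is sound, and the Strassen dilation produces exactly the required $x$: the barycentre condition gives $(i)$, and the kernel equation $\mu_X Q=\mu_\lambda$ gives the equidistribution needed for $(ii)$. The measurability and boundedness checks you flag are routine, and your remark that one cannot simply pass to a weak limit of convex combinations is exactly the point.

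As for comparison: the present paper does not actually prove this statement. In \S3.1 it says explicitly that these spectral results ``are proved in \cite{bloch_flaschka_ratiu93} so we shall omit the proofs,'' and no argument for the Horn direction appears in the text. So there is no in-paper proof to weigh your approach against. Your Strassen/martingale-kernel construction is a clean, self-contained route; the alternative Choquet route you sketch in the last paragraph is closer in spirit to the orbit picture the paper emphasizes (building $x$ fibre-by-fibre from rearrangements $\lambda\circ\phi_\theta$), and it makes $(ii)$ trivial at the cost of the measurable-selection issue you note. Either is acceptable here.
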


\subsection{Problems \textbf{M} and \textbf{K}} We formulate these
two  problems in our setting and then prove that $\mathbf{M}$ implies 
$\mathbf{K}$. In both problems, and later in the dual problem
$\mathbf{D}$, the integrand $c(x,y)$ of the cost function is a continuous
real-valued function on $\mathbb{R}^n\times\mathbb{R}^n$.

\underline{Problem \textbf{K}}. The analogue of $\mathbf{K}$ in 
our infinite dimensional setting is the following (see, e.g., \cite{evans2001}).
Consider the class of probability measures $\mu$ on 
$\mathbb{R}^n\times\mathbb{R}^n$
with $\rm{proj}_x\mu=\mu^+\,, \rm{proj}_y\mu=\mu^-$. We wish to 
 find $\mu$ which minimizes
\begin{equation}
J[\mu]=\int_{\mathbb{R}^n\times\mathbb{R}^n}c(x,y)d\mu(x,y)\,.
\end{equation}

\underline{Problem \textbf{M}}. The analogue of $\mathbf{M}$ in 
our infinite dimensional setting is the following.
Given are two nonnegative Radon measures $\mu^+,\mu^-$ on $\mathbb{R}^n$  
satisfying $\mu^+(\mathbb{R}^n)=\mu^-(\mathbb{R}^n)$. 
Consider the class of measurable 1-1 mappings 
$s:\mathbb{R}^n\rightarrow \mathbb{R}^n$ which 
rearrange $\mu^+$ to $\mu^-$, 
$s_{\#}(\mu^+)=\mu^-$ ($s_{\#}$ denotes push forward of measures),
i.e.,
\begin{equation}
\int_X h(s(x)d\mu^+(x)=\int_Y h(s(x)d\mu^-(y)\,,
\end{equation}
for  any continuous function $h$, where 
$X$ is the support of $\mu^+$
and $Y$ is the support of $\mu^-$. We want to find $ s $ 
which minimizes
\begin{equation}
I[s]=\int_{\mathbb{R}^n}c(x,s(x))d\mu^+(x)\,.
\end{equation}
The cost function is often chosen to be quadratic: 
$$
c(x,y)=\tfrac{1}{2} \|x-y\|^2\,.
$$
This corresponds to the Wasserstein distance $W_2$. We recall its 
definition (see\cite{evans2001}).

\begin{defn}
 Let $f^+, f^-:\mathbb{R}^n \rightarrow  \mathbb{R}$ be
smooth functions with compact supports. The Wasserstein 
distance $W_2$ between them is given by 
\begin{equation}
\label{Wasserstein_distance}
d(f^+,f^-)^2=\text{inf}\left\{\frac{1}{2}\int_{\operatorname{supp}f^+} 
\int_{\operatorname{supp}f^-} 
\|x-y\|^2d\mu(x,y)\right\}
\end{equation}
where the infimum is over all nonnegative Radon measures $\mu$ with 
projections $\mu^+=f^+dx,\,\mu^-=f^-dy$, i.e., $\mu^+,\mu^-$  are 
assumed to be given by the smooth densities $f^+,f^-$.
\end{defn}

In our finite dimensional setting we can think of 
$s(\Lambda)=\Theta^T\Lambda\Theta$ and $c =\|N-L\|=
\|N-\Theta^T\Lambda\Theta\|$.

\medskip

Now we show that $\mathbf{M} \ge \mathbf{K}$.
Thus, in the context of our infinite dimensional setting, we consider 
the cost function
\begin{equation}
-\left\langle x(z,\theta),z\right\rangle =
-\int_0^1\!\!\!\int_0^1 x(z,\theta)zdzd\theta\,.
\label{cost1}
\end{equation}
We know there is a measure preserving map on $\mathcal{A}$ such 
$x=\lambda_2\circ\psi$, where we define $\lambda_2(z,\theta):=\lambda(z)$. 
This is the analogue of conjugation of a diagonal matrix by a unitary one.
Further, if  $\psi(z,\theta)=(\tau(z,\theta),\eta(z,\theta))$, then 
\begin{equation}\label{ds}
\int_0^1x(z,\theta)d\theta=\int_0^1\lambda(\tau(z,\theta))d\theta
=(P\lambda)(z)\,,
\end{equation}
where $P$ is a doubly stochastic operator. Thus we get the following result. 
\begin{thm}
$\mathbf{M}\ge\mathbf{K}$
in the setting of the diffeomorphism group of the annulus. 
\end{thm}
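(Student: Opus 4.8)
The plan is to mirror the finite-dimensional argument $\mathbf{M}\ge\mathbf{K}$, using the Diagonalization Theorem as the infinite-dimensional substitute for conjugating a diagonal matrix by a unitary. Concretely, suppose $s$ is an admissible competitor for $\mathbf{M}$, i.e., a measure preserving $1$--$1$ map rearranging $\mu^+$ to $\mu^-$; in the annulus setting this amounts to an element of $\operatorname{SMeas}(\mathcal A)$ (or its closure) moving $\lambda_2$ to some $x=\lambda_2\circ\psi$ with the prescribed spectral data. First I would record that from such an $s$ one manufactures a candidate measure $\mu$ for $\mathbf{K}$ by pushing forward along the graph map $z\mapsto(z,s(z))$, so that $\operatorname{proj}_x\mu=\mu^+$, $\operatorname{proj}_y\mu=\mu^-$, and $I[s]=J[\mu]$. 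Since the feasible set for $\mathbf{K}$ contains all such graph measures (and more), taking infima gives $\mathbf{M}=\inf_s I[s]\ge\inf_\mu J[\mu]=\mathbf{K}$.

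Second, I would make the translation to the operator-theoretic picture explicit, which is the content of \eqref{ds}: given $x=\lambda_2\circ\psi$ with $\psi=(\tau,\eta)$, the zeroth Fourier coefficient satisfies $\pi(x)(z)=\int_0^1\lambda(\tau(z,\theta))\,d\theta=(P\lambda)(z)$ for a doubly stochastic operator $P$ on $L^1([0,1])$ — this is the exact analogue of $L_{ii}=\sum_j|Q_{ij}|^2\lambda_j$ with $P_{ij}=|Q_{ij}|^2$. The cost \eqref{cost1}, namely $-\langle x,z\rangle=-\int_0^1\int_0^1 x(z,\theta)\,z\,dz\,d\theta=-\int_0^1\pi(x)(z)\,z\,dz=-\langle P\lambda,z\rangle$, therefore depends on $x$ only through $P\lambda$, so minimizing over admissible $x$ (i.e., over $\operatorname{SMeas}(\mathcal A)$-rearrangements of $\lambda_2$) is bounded below by minimizing the linear functional $\lambda\mapsto -\langle P\lambda, z\rangle$ over all doubly stochastic $P$, which is precisely $\mathbf{K}$ in this setting. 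This is the direct transcription of "$\mathbf{M}\ge\mathbf{K}$ since a permutation matrix is a special doubly stochastic matrix."

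The step I expect to be the main obstacle is verifying that every admissible $\mathbf{M}$-competitor $s$ genuinely factors through the Diagonalization Theorem — i.e., that rearranging $\mu^+=(\lambda_2)_\#(\text{Leb})$ to $\mu^-$ is realized by a measure preserving $\psi$ on $\mathcal A$ with $x:=\lambda_2\circ\psi$ carrying the correct spectral integrals $I_p$, so that \eqref{ds} applies and $P$ is honestly doubly stochastic on $L^1$ (not merely on $L^2\cap L^\infty$). One must also confirm that the graph measure $(\mathrm{id},s)_\#\mu^+$ is a legitimate element of the $\mathbf{K}$-feasible class and that $I[s]=J[\mu]$ holds for the continuous cost $c$ — routine but requiring that $s$ be Borel and $\mu^+$ Radon. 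I would handle this by invoking the Diagonalization Theorem and the Ryff theorems quoted above, which guarantee exactly the doubly stochastic $P$ and the weak compactness/convexity needed; modulo measurability bookkeeping on sets of measure zero (the ${\mathcal Z}_i$), the inequality then follows formally.

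\begin{proof}
Let $x\in L^2(\mathcal{A})\cap L^\infty(\mathcal{A})$ be an admissible competitor for $\mathbf{M}$, obtained from a measure preserving rearrangement of the diagonal data. By the Diagonalization Theorem there is a measure preserving map $\psi:\mathcal{A}\setminus\mathcal{Z}_1\to\mathcal{A}\setminus\mathcal{Z}_2$ with $\mathcal{Z}_i$ of measure zero and $x=\lambda_2\circ\psi$, where $\lambda_2(z,\theta)=\lambda(z)$ and $\lambda$ is the nonincreasing rearrangement from the Spectral Theorem. Writing $\psi(z,\theta)=(\tau(z,\theta),\eta(z,\theta))$, equation \eqref{ds} gives
\begin{equation*}
\pi(x)(z)=\int_0^1 x(z,\theta)\,d\theta=\int_0^1\lambda(\tau(z,\theta))\,d\theta=(P\lambda)(z),
\end{equation*}
where $P$ is a doubly stochastic operator in the sense of Ryff. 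Hence the cost \eqref{cost1} satisfies
\begin{equation*}
-\left\langle x,z\right\rangle=-\int_0^1\!\!\int_0^1 x(z,\theta)\,z\,dz\,d\theta=-\int_0^1\pi(x)(z)\,z\,dz=-\left\langle P\lambda,z\right\rangle,
\end{equation*}
so the $\mathbf{M}$-cost of $x$ equals the value of the linear functional $q\mapsto-\langle q,z\rangle$ on the point $q=P\lambda\in\Omega(\lambda)$. Since $P$ is doubly stochastic, $P\lambda$ is an admissible competitor for $\mathbf{K}$; taking the infimum over all admissible $x$ on the left and noting that the feasible set for $\mathbf{K}$ contains all such $P\lambda$ (and more), we conclude $\mathbf{M}\ge\mathbf{K}$.
\end{proof}
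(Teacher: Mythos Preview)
Your proof is correct and follows essentially the same route as the paper: invoke the Diagonalization Theorem to write $x=\lambda_2\circ\psi$, use \eqref{ds} to identify $\pi(x)=P\lambda$ with $P$ doubly stochastic, and then observe that the $\mathbf{M}$-cost $-\langle x,z\rangle=-\langle P\lambda,z\rangle$ is the value of a $\mathbf{K}$-competitor. Your write-up is more detailed (and more careful about the measurability caveats) than the paper's terse argument preceding the theorem, but the underlying idea is identical.
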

We can also solve the Monge problem in this case providing the minimizer.
\medskip

{\bf Solving the  Monge problem: } We note firstly that  our Schur theorem 
together with diagonalization problem above solves the Monge problem in 
this case: we have found $\psi$, a measure preserving map, that 
determines
a nondecreasing $\lambda$ with the same moments as $x$. This is the minimizer 
of the cost function for $x(z,\theta)$ paired with $z$, exactly as in the 
finite dimensional case. 
\medskip

There is also a dynamical way of understanding the optimization in the 
infinite dimensional setting, as described in \cite{bloch_flaschka_ratiu96}.
We will omit the technical details in this paper but note there is a 
version of the normal metric on the adjoint  orbit of the diffeomorphism 
group. The gradient flow of the function \eqref{cost1} with respect to the 
normal metric  in this case is 
\begin{equation}
x_t=\{x,\{x,z\}\}\,,
\end{equation} 
where the bracket in this setting is the canonical $(z,\theta)$-Poisson 
bracket. One can show that the flow converges to the minimum
$x$, is monotonically aligned with $z$, and has the same moments as 
the initial data, as long as the initial data is monotonic. Otherwise, the 
flow develops shocks. 

This is an equation of Monge--Amp\`ere type. 

\subsection{The dual problem \textbf{D}} In general, this problem 
is formulated in the following way (see \cite{evans2001}).

\underline{Problem \textbf{D}}. Given  Radon measures 
$\mu^+$, $ \mu^-$ as in Problem $\mathbf{M}$, we want to maximize
over all continuous functions $u, v:\mathbb{R}^n \rightarrow  \mathbb{R}$  
the functional   
\begin{equation}
K[u,v]=\int_X u(x)d\mu^+(x) +\int_Y v(y)d\mu^-(y),
\end{equation}
where $X$ is the support of $\mu^+$ and $Y$ is the support of $\mu^-$,
subject to $u(x)+v(y)\le c(x,y)$.
\medskip

In the diffeomorphism group setting we take   
$c:C^0([0,1], \mathbb{R}) \times [0,1] \rightarrow  
\mathbb{R}$, $c(\alpha (z),z):=-\alpha(z)z$, where  $\alpha(z)$ 
is a given measurable rearrangement of $\lambda(z)$,  $\lambda(z)$ as 
in the Spectral Theorem \ref{spectral}, and $u=u(z)$, $v=v(\alpha(z))$.
The problem is to  find the continuous functions 
$u$ and $v$ that maximize 
\begin{equation}
\int_0^1u(z)dz+\int_0^1v(\alpha (z))dz
\end{equation}
subject to 
$$
u(z)+v(\alpha(z))\le -z\alpha(z).
$$

\begin{thm}
  $\mathbf{K}\ge\mathbf{D}$ in the diffeomorphism group setting. 
\end{thm}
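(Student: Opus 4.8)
The plan is to mimic, essentially verbatim, the finite-dimensional argument that $\mathbf{K}\ge\mathbf{D}$ given earlier in the excerpt, replacing sums by integrals and doubly stochastic matrices by doubly stochastic operators in the sense of Ryff. First I would fix an admissible pair $(u,v)$ for the dual problem, so that $u(z)+v(\alpha(z))\le -z\alpha(z)$ for all $z\in[0,1]$, where $\alpha$ is the chosen measurable rearrangement of $\lambda$. In the Monge--Kantorovich (problem $\mathbf{K}$) side, admissible transport plans correspond, via the diagonalization theorem and \eqref{ds}, to doubly stochastic operators $P$ on $L^1([0,1])$ with $P\lambda$ playing the role of $\pi(x)$; concretely, for $x=\lambda_2\circ\psi$ with $\psi(z,\theta)=(\tau(z,\theta),\eta(z,\theta))$, the relevant coupling sends mass according to $\theta\mapsto(z,\tau(z,\theta))$, and the $\mathbf{K}$-cost is $-\int_0^1\!\int_0^1 \lambda(\tau(z,\theta))\,z\,d\theta\,dz$.

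The key computation is then: integrate the pointwise inequality against this coupling. That is, for each fixed $z$, integrate in $\theta$ the inequality $u(z)+v(\lambda(\tau(z,\theta)))\le -z\,\lambda(\tau(z,\theta))$, then integrate in $z$. The left side produces $\int_0^1 u(z)\,dz + \int_0^1\!\int_0^1 v(\lambda(\tau(z,\theta)))\,d\theta\,dz$, and the second term equals $\int_0^1 v(\alpha(z))\,dz$ precisely because $\psi$ is measure preserving and $\alpha$ is a rearrangement of $\lambda$ — this is where the diagonalization theorem and the defining property of $\overline{W}$ enter. The right side is exactly the $\mathbf{K}$-cost of the coupling associated to $x$. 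Hence $K[u,v]\le J[\mu]$ for every admissible $(u,v)$ and every admissible $\mu$ arising this way; taking the supremum over $(u,v)$ and the infimum over $\mu$ gives $\mathbf{K}\ge\mathbf{D}$.

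The step I expect to be the main obstacle is the measure-preserving change of variables that identifies $\int_0^1\!\int_0^1 v(\lambda(\tau(z,\theta)))\,d\theta\,dz$ with $\int_0^1 v(\alpha(z))\,dz$: one must check that the "$\theta$-averaged" pushforward of Lebesgue measure on the annulus under $(z,\theta)\mapsto\tau(z,\theta)$ is the same measure on $[0,1]$ that is pushed forward by the rearrangement defining $\alpha$, so that the continuous test function $v$ integrates to the same value. This is really the assertion that $P\lambda$ and $\alpha$ are equimeasurable (both being rearrangements/doubly-stochastic images of $\lambda$), combined with the fact that $v$ composed with $\lambda$ is still integrable because $v$ is continuous and $\lambda\in L^\infty$. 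A secondary technical point is ensuring the pointwise dual inequality may be integrated at all, i.e.\ that $v\circ\alpha$ and the product $z\,\alpha(z)$ lie in $L^1([0,1])$; both follow from continuity of $v$ and boundedness of $\lambda$ (hence of $\alpha$), so no integrability pathology arises. Once these are in hand, the inequality chain is immediate and the proof closes exactly as in the finite-dimensional case treated above.
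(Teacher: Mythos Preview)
Your proposal follows the standard Kantorovich-duality template---integrate the pointwise dual constraint against an arbitrary admissible coupling---exactly as in the finite-dimensional argument earlier in the paper. The paper, however, does \emph{not} do this. Its proof is much shorter and more direct: it simply integrates the one-variable constraint $u(z)+v(\alpha(z))\le -z\alpha(z)$ over $z\in[0,1]$ to obtain
\[
\int_0^1 u(z)\,dz+\int_0^1 v(\alpha(z))\,dz\le -\int_0^1 z\,\alpha(z)\,dz,
\]
and then observes (via a rearrangement argument) that the right-hand side and the $\mathbf{K}$-cost $-\int_0^1\!\int_0^1 x(z,\theta)z\,dz\,d\theta$ are both optimized at the nonincreasing rearrangement $\lambda$. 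No coupling, no change of variables on the annulus, no invocation of the diagonalization theorem is used.

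There is also a genuine gap in your route given the paper's formulation of $\mathbf{D}$. You write down the inequality $u(z)+v(\lambda(\tau(z,\theta)))\le -z\,\lambda(\tau(z,\theta))$ and integrate it, but the dual constraint as stated in the paper is only $u(z)+v(\alpha(z))\le -z\alpha(z)$ for the \emph{fixed} rearrangement $\alpha$; it is a constraint along the graph of $\alpha$, not the two-variable inequality $u(z)+v(y)\le -zy$ for all $z$ and all $y$ in the range of $\lambda$. Substituting $y=\lambda(\tau(z,\theta))$ in place of $\alpha(z)$ is therefore not licensed: even though $\lambda(\tau(z,\theta))$ lies in the range of $\alpha$, the point at which $u$ is evaluated does not match. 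Your argument would be correct if the dual problem carried the usual full constraint $u(x)+v(y)\le c(x,y)$ for all $(x,y)$, and the measure-preserving change of variables you flag as the ``main obstacle'' would then indeed be the only thing to check; but that is a different (stronger) dual than the one the paper sets up, so the paper's one-line integration in $z$ sidesteps the whole issue.
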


 \begin{proof}
 
 Integrating the inequality constraint we  have 
 \begin{equation}\label{inte}
\int_0^1u(z)dz+\int_0^1v(\alpha(z))dz\le -\int_0^1z\alpha(z)zdz
\end{equation}
The right hand side of this inequality is clearly minimized when 
$\alpha(z)=\lambda(z)$ since $\lambda(z)$ is nonincreasing. 

Now for $\mathbf{K}$ we want to minimize 
 \begin{equation}
-\left\langle x(z,\theta),z\right\rangle =
-\int_0^1\!\!\!\int_0^1 x(z,\theta)zdzd\theta\
\label{cost2}
\end{equation}
over all $x$ with fixed moments. This is also minimized when 
$x(z,\theta)=\lambda (z)$ and thus minimizing over this and 
maximizing over the left hand side of  (\ref{inte}) we obtain 
the result.
\end{proof}

\begin{rmk}
Comparison with the Wasserstein gradient: we can compare our gradient flows those  arising from the Wasserstiein metric.  Consider again the 
cost function 
\begin{equation}
-\left\langle x(z,\theta),z \right\rangle =
-\int_0^1\!\!\!\int_0^1 x(z,\theta)zdzd\theta\,.
\end{equation}
In general, the $W_2$-gradient of  $E(\rho)$ is given by \cite{jordan1997free}
\begin{equation}
\nabla _{W_2}E(\rho)=
-\nabla\cdot\left(\rho\nabla\frac{\partial E}{\partial\rho}\right)\,.
\end{equation}
If we choose $E=\int_0^1\!\!\int_0^1 z\rho(z,\theta)$ as above, 
we clearly get simply $\rho_z$ and the flow
$$
\rho_t+\rho_z=0\,.
$$
This is the simplest continuity equation in accordance with the fact that all 
Wasserstein gradient flows are continuity equations,
$$
\rho_t+\nabla\cdot(v\rho)=0\,.
$$
Here we reduced  to the one-dimensional case with constant velocity which is,
 of course, not interesting for us. 
 \end{rmk}

In future work we will consider the  extension of our infinite-dimensional 
Schur--Horn theorem to more general measure spaces 
and its application to Monge--Kantorovich in other settings. 

{\bf Acknowledgement:} We would like to thank Wilfred Gangbo,  Tryphon Georgiou and Robert McCann for useful conversations.

\end{document}